\documentclass[12pt,a4paper]{article}
\usepackage[a4paper,margin=2.6cm]{geometry}
\usepackage{amsmath,amssymb,amsthm,mathtools}
\usepackage{enumitem}
\usepackage{xcolor}
\usepackage[unicode,colorlinks=true,linkcolor=blue!60!black,citecolor=blue!60!black,urlcolor=blue!60!black]{hyperref}
\hypersetup{pdftitle={Compactness of Toeplitz Operators on the Bergman Space}}
\newtheorem{theorem}{Theorem}[section]
\newtheorem{lemma}[theorem]{Lemma}
\newtheorem*{conjecture}{Conjecture}
\theoremstyle{remark}

\newcommand{\D}{\mathbb D}
\newcommand{\dd}{\,\mathrm d}
\newcommand{\dA}{\,\mathrm dA}
\newcommand{\A}{A^2(\D)}
\newcommand{\norm}[1]{\left\lVert#1\right\rVert}
\newcommand{\abs}[1]{\left\lvert#1\right\rvert}
\DeclareMathOperator{\id}{id}
\DeclareMathOperator{\Span}{span}
\title{Compactness of Toeplitz Operators on the Bergman Space}
\author{%
Guangfu Cao, \hskip5mm Li He, \hskip5mm Shuqing Zhang$^1$\\
School of Mathematics and Information Science,\\
Guangzhou University,\\
Guangzhou 510006, China\\
\\[1ex]}
\date{}
\begin{document}
\maketitle
\footnotetext[1]{Corresponding authors.}

\begin{abstract}
Let $\varphi\in L^\infty(\D)$. We study compactness criteria for \(T_\varphi\) on the Bergman space $A^2(\D)$. Axler and Zheng~\cite{AZ1998} established a necessary and sufficient condition for compactness in terms of the Berezin transform. However, for a general bounded measurable function $\varphi $, 
its Berezin transform $\tilde{\varphi}$ does not readily reflect the local behavior  of $\varphi$. Motivated by a characterization in terms of the symbol itself, Zhu~\cite{ZhuSlides} proposed a conjecture on compact Toeplitz operators. In this paper, we characterize compactness of $T_\varphi$ on the unweighted Bergman space in terms of local averages of the symbol. We prove that compactness is equivalent to the vanishing of averages over Bergman disks of any prescribed fixed radius. We also establish an equivalent criterion in terms of Carleson box averages that tend to zero uniformly in the angular variable. Finally, we construct a nonnegative bounded symbol whose Carleson box averages tend to zero at every fixed angle, although the associated Toeplitz operator is not compact. 
\end{abstract}
\noindent\textit{Keywords:} Bergman space; Toeplitz operators; Bergman disk; Carleson box; compactness of Toeplitz operators.\\
\noindent\textit{2010 Mathematics Subject Classification:}32A37, 42B15, 47B35.\\
\section{Introduction}
Let $\D=\{z\in\mathbb C:|z|<1\}$, and let
\[
\mathrm dA(z)=\frac1\pi\dd x\dd y,\qquad z=x+iy,
\]
denote the normalized area measure on the unit disk $\D$.
For a measurable set $E\subset\D$, write $|E|=\int_E\dA$. The unweighted Bergman space is
\[
\A=\left\{f\in H(\D):\norm f_2^2=\int_\D|f(z)|^2\dA(z)<\infty\right\},
\]
equipped with the inner product
\[
\langle f,g\rangle=\int_\D f(z)\overline{g(z)}\dA(z).
\]
Let $P:L^2(\D,\mathrm dA)\to\A$ be the orthogonal projection. For $\varphi\in L^\infty(\D)$, define
\[
T_\varphi f=P(\varphi f),\qquad f\in\A.
\]
This operator is bounded, with $\norm{T_\varphi}\le\norm\varphi_\infty$. These definitions and basic properties can be found in~\cite{ACM1982,HKZ2000,Zhu2007}.

For $a\in\D$, write
\[
\sigma_a(w)=\frac{a-w}{1-\overline a w},
\qquad
E_\rho(a)=\{z\in\D:|\sigma_a(z)|<\rho\},\quad 0<\rho<1.
\]
To fix the normalization of the metric, we set
\[
\beta(a,z)=\operatorname{arctanh}|\sigma_a(z)|,
\qquad D(a,r)=\{z\in\D:\beta(a,z)<r\}.
\]
Thus $D(a,r)=E_{\tanh r}(a)$. Multiplying the Bergman distance by a fixed positive constant does not affect the statements below concerning all fixed radii. Let $S_h$ denote a Carleson box of scale $h$. For $0<h<1$, set
\[
S(\theta,h)=\{se^{it}:1-h<s<1,\ |t-\theta|<h\},
\]
where angles are understood modulo $2\pi$. The radial height is $h$, and the full angular width is $2h$.

The general theory of Toeplitz operators on Bergman spaces goes back, in particular, to Axler, Conway, and McDonald~\cite{ACM1982}. 
Compactness of Bergman space Toeplitz operators has been studied through positivity, automorphism localization, and the Berezin transform. Positive symbols and measures admit criteria in terms of Berezin transforms and local averages; see Zhu~\cite{Zhu1988} and the accounts in~\cite{HKZ2000,Zhu2007}. Stroethoff and Zheng~\cite{SZ1992} obtained compactness criteria for Toeplitz and Hankel operators with bounded symbols on the ball and polydisk, including the unit disk. Korenblum and Zhu~\cite{KZ1995} used Tauberian theorems to characterize compactness for bounded radial symbols. Further compactness results are given by Stroethoff~\cite{Stroethoff1998}; see also Miao and Zheng~\cite{MiaoZheng2004} for compact operators on Bergman spaces and related operator-theoretic characterizations. Compactness of Toeplitz  operators has also been approached from two other directions: the operator-algebraic viewpoint of Salinas, Sheu, and Upmeier~\cite{SSU1989} (foliation $C^*$-algebras on pseudoconvex domains), and the function-theoretic viewpoint of Axler~\cite{Axler1986} (Bloch space and commutators of multiplication operators, including the compactness of the associated Hankel operators).

For general bounded symbols on the disk, Axler and Zheng~\cite{AZ1998} proved that a finite sum of finite products of Toeplitz operators is compact if and only if its Berezin transform vanishes at the boundary. In particular, if
\[
k_a(z)=\frac{1-|a|^2}{(1-\overline a z)^2},
\qquad
\widetilde\varphi(a)=\langle T_\varphi k_a,k_a\rangle,
\]
then
\[
T_\varphi\text{ is compact}
\quad\Longleftrightarrow\quad
\widetilde\varphi(a)\longrightarrow0\quad(|a|\to1).
\]
Su\'arez~\cite{Suarez2007} subsequently established essential-norm estimates and a Berezin-transform compactness criterion for operators in the norm-closed Toeplitz algebra on $A^p(\mathbb B_n)$, $1<p<\infty$; the disk Hilbert-space case is included.

Localization methods provide another perspective on this circle of results. Isralowitz, Mitkovski, and Wick~\cite{IMW2015} obtained compactness criteria on Bergman and Fock spaces using the Berezin transform and reproducing kernels under suitable localization hypotheses. Xia~\cite{Xia2015} further investigated localization and the Toeplitz algebra on the Bergman space.

There are also important extensions beyond bounded symbols. Zorboska~\cite{Zorboska2003} characterized boundedness and compactness for symbols in $\mathrm{BMO}^1$ by means of the Berezin transform. Cima and \v{C}u\v{c}kovi\'c~\cite{CC2005} constructed compact Toeplitz operators with unbounded symbols. Taskinen and Virtanen~\cite{TV2018} obtained compactness criteria on $A^p(\D)$ for integrable symbols satisfying an additional weak averaging condition.  For positive measure symbols on weighted Bergman spaces of the disk, Pel\'aez and R\"atty\"a~\cite{PR2016} characterized Schatten-class Toeplitz operators for radial weights satisfying a tail-doubling condition. 

The purpose of the present paper is not to replace the known Berezin-transform characterization, but to obtain equivalent criteria directly in terms of these local averages of a bounded, possibly complex-valued symbol. Cancellation must be retained: the averages involve $\varphi$. In the talk \emph{Ten Open Problems}, Zhu proposed the following conjecture, formulated in terms of the symbol itself, concerning compact Toeplitz operators on the unweighted Bergman space of the unit disk; see~\cite{ZhuSlides}.
\begin{conjecture}
Let $\varphi\in L^\infty(\D)$, and fix an arbitrary $r_0>0$.
Then the following three conditions are equivalent:
\begin{enumerate}[label=\textnormal{(C\arabic*)}]
\item $T_\varphi$ is compact on $\A$;
\item for the \emph{arbitrarily prescribed radius} $r_0$,
\begin{equation}\label{eq:conj-disk}
\lim_{|a|\to1}\frac1{|D(a,r_0)|}
\int_{D(a,r_0)}\varphi(z)\dA(z)=0;
\end{equation}
\item the Carleson box averages satisfy
\begin{equation}\label{eq:conj-box}
\lim_{h\downarrow0}
\frac1{|S(\theta,h)|}\int_{S(\theta,h)}\varphi(z)\dA(z)=0.
\end{equation}
\end{enumerate}
\end{conjecture}
For nonnegative symbols, compactness is characterized by the vanishing of fixed-radius Bergman disk averages and by the uniform vanishing of Carleson box averages; see the positive-symbol theory in~\cite{HKZ2000,Zhu1988,Zhu2007}. Angular pointwise vanishing of the latter averages is not sufficient, even for nonnegative symbols, as shown in Section 5. Zhu's report points out that Jordi Pau and K.~Zhu proved that if the Bergman disk averages tend to zero at the boundary for \emph{every} fixed $r>0$, then $T_\varphi$ is compact; unfortunately, we have not been able to find the relevant literature. In the report,  Zhu asks whether ``EVERY'' could be replaced by ``SOME.'' We obtain compactness criteria in terms of local averages over Bergman disks and Carleson boxes, yielding the following results.
\begin{theorem}\label{thm:main}
Let \(\varphi\in L^\infty(\mathbb D)\), and fix \(r_0>0\). The following five conditions are equivalent:
\begin{enumerate}[label=\textnormal{(\roman*)}]
\item $T_\varphi$ is compact on $\A$;
\item for every fixed Bergman radius $r>0$,
\begin{equation}\label{eq:bergman}
\lim_{|a|\to1}\frac1{|D(a,r)|}\int_{D(a,r)}\varphi(z)\dA(z)=0;
\end{equation}
\item there exists a sequence $r_j\downarrow0$ such that, for every fixed $j$,
\[
\lim_{|a|\to1}\frac1{|D(a,r_j)|}\int_{D(a,r_j)}\varphi(z)\dA(z)=0.
\]
\item for  some fixed radius $r_0$,
\[
\lim_{|a|\to1}\frac1{|D(a,r_0)|}\int_{D(a,r_0)}\varphi(z)\dA(z)=0.
\]
\item there exists a fixed radius $r_*>0$ such that
\[
\lim_{|a|\to1}\frac1{|D(a,r_*)|}\int_{D(a,r_*)}\varphi(z)\dA(z)=0.
\]
\end{enumerate}
Condition \textnormal{(ii)} is equivalent to requiring that, for every fixed $\rho\in(0,1)$,
\begin{equation}\label{eq:main}
\lim_{|a|\to1}\frac1{|E_\rho(a)|}\int_{E_\rho(a)}\varphi(z)\dA(z)=0.
\end{equation}
All boundary limits hold as the centers approach the unit circle in an arbitrary manner, with no restriction on the direction of approach;
condition \textnormal{(iii)} does not require uniform convergence in  $j$.
\end{theorem}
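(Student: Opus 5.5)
The plan is to reduce everything to the Axler--Zheng criterion, $T_\varphi$ compact $\Leftrightarrow$ $\widetilde\varphi(a)\to0$ as $|a|\to1$, together with Möbius invariance. The equivalence of (ii) with \eqref{eq:main} is immediate from $D(a,r)=E_{\tanh r}(a)$. The implications (ii)$\Rightarrow$(iii), (ii)$\Rightarrow$(iv) and (iv)$\Leftrightarrow$(v) are trivial. So the substance lies in (i)$\Rightarrow$(ii), (iii)$\Rightarrow$(i) and (iv)$\Rightarrow$(i).

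Throughout I use the unitaries $U_af=(f\circ\sigma_a)\,\sigma_a'$. They satisfy $U_aT_\varphi U_a=T_{\varphi\circ\sigma_a}$, and by change of variables
\[
\frac1{|D(a,r)|}\int_{D(a,r)}\varphi\dA=\frac{1}{c_r}\int_{E_{\tanh r}(0)}(\varphi\circ\sigma_a)(w)\,\frac{(1-|a|^2)^2}{|1-\bar a w|^4}\dA(w),
\]
where $c_r$ is the corresponding normalizing integral of the same weight.

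\textbf{(i)$\Rightarrow$(ii).} Let $a_n$ be a sequence with $|a_n|\to1$. Since $U_{a_n}\to0$ weakly and $T_\varphi$ is compact, we get $T_{\varphi\circ\sigma_{a_n}}\to0$ in the strong operator topology. Testing against polynomials gives $\int(\varphi\circ\sigma_{a_n})p\bar q\dA\to0$. The span of the functions $p\bar q$ is dense in $C(\overline\D)$ by Stone--Weierstrass, and hence dense in $L^1$. Since $\norm{\varphi\circ\sigma_{a_n}}_\infty$ is bounded, it follows that $\varphi\circ\sigma_{a_n}\to0$ weak-$*$ in $L^\infty$. On $E_\rho(0)$ the weight $(1-|a|^2)^2/|1-\bar aw|^4$, renormalized, converges uniformly along a further subsequence to a continuous function. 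Testing the weak-$*$ convergence against this function multiplied by $\chi_{E_\rho(0)}$ gives the vanishing of the average, for every radius.

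\textbf{(iii)$\Rightarrow$(i).} The key fact is that Berezin transforms of bounded functions are uniformly Lipschitz in $\beta$:
\[
|\widetilde\psi(a)-\widetilde\psi(b)|\le C\norm\psi_\infty\,\beta(a,b).
\]
Write $\widehat\varphi_r(a)$ for the average of $\varphi$ over $D(a,r)$. This is the invariant convolution of $\varphi$ with the normalized radial kernel $\chi_r$. The Berezin transform is also an invariant convolution with a radial kernel, and such convolutions commute, so
\[
\widetilde{\widehat\varphi_r}=\widehat{(\widetilde\varphi)}_r .
\]
If $\widehat\varphi_{r_j}\to0$ at the boundary, then so does its Berezin transform, hence so do the $r_j$-averages of $\widetilde\varphi$. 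By the Lipschitz bound, these averages differ from $\widetilde\varphi$ by at most $Cr_j\norm\varphi_\infty$. Therefore
\[
\limsup_{|a|\to1}|\widetilde\varphi(a)|\le Cr_j\norm\varphi_\infty
\]
for each $j$. Letting $j\to\infty$ gives $\widetilde\varphi\to0$, and Axler--Zheng yields compactness.

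\textbf{(iv)$\Rightarrow$(i), the main obstacle.} Here the radius is fixed, so the Lipschitz trick does not close. I would argue by compactness. Take $a_n$ with $|a_n|\to1$ and pass to a subsequence with $\varphi\circ\sigma_{a_n}\to\psi$ weak-$*$. Hypothesis (iv), applied at the points $\sigma_{a_n}(b)$, which tend to the boundary, should give $\psi*\chi_{r_0}\equiv0$ on $\D$. Then $\widetilde\varphi(a_n)=\widetilde{\varphi\circ\sigma_{a_n}}(0)\to\widetilde\psi(0)$. It would therefore suffice to show that $\psi\equiv0$, or at least that $\widetilde\psi=0$.

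This is a Wiener--Tauberian/Pompeiu-type injectivity question for the hyperbolic disk. I would attack it through the spherical (Helgason--Fourier) transform. The plan is to compare the zero set of the spherical transform of $\chi_{r_0}$ with that of the Berezin kernel, whose transform has no zeros. The point to exploit is that $\widetilde\psi$ lies in the closed invariant subspace generated by $\psi$. The difficulty is that bounded joint eigenfunctions of the Laplacian, i.e.\ spectral parameters in the strip $|\operatorname{Im}\lambda|\le\tfrac12$, may be annihilated by $\chi_{r_0}$. So I expect to need extra structure of the weak-$*$ limits $\psi$ beyond mere boundedness. If no such structure is available, this step is exactly where a counterexample could live, and I would check it carefully before committing.
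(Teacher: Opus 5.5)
Your (i)$\Rightarrow$(ii) is the paper's argument, and the trivial implications are fine. However, (iv)$\Rightarrow$(i), the heart of the theorem, is left unproved. Worse, the reduction you set up for it discards exactly the information that makes it true, because Lebesgue area is not M\"obius invariant. Since $E_{\rho_0}(\sigma_{a_n}(b))=\sigma_{a_n}(E_{\rho_0}(b))$, a change of variables gives
\[
M_{\rho_0}\varphi(\sigma_{a_n}(b))=\frac{\int_{E_{\rho_0}(b)}(\varphi\circ\sigma_{a_n})(w)\,|1-\overline{a_n}w|^{-4}\dA(w)}{\int_{E_{\rho_0}(b)}|1-\overline{a_n}w|^{-4}\dA(w)}.
\]
So if $a_n\to\zeta$ and $\varphi\circ\sigma_{a_n}\to\psi$ weak-star, hypothesis (iv) yields $\int_{E_{\rho_0}(b)}\psi(w)\,|1-\overline\zeta w|^{-4}\dA(w)=0$ for all $b\in\D$. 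In invariant terms, it is $\psi\,P(\cdot,\zeta)^2$ ($P$ the Poisson kernel), not $\psi$, whose invariant averages over all $D(b,r_0)$ vanish.

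For the relation as you wrote it, $\psi*\chi_{r_0}\equiv0$, your worry is justified and fatal. Let $\phi$ be a spherical function (radial, $\phi(0)=1$, eigenvalue $-s(1-s)$ of the invariant Laplacian) whose radial derivative vanishes on $\partial D(0,r_0)$. Such $\phi$ exist for a sequence of positive values of $s(1-s)$, so $0<\Re s<1$ and $\phi$ is bounded. The divergence theorem and the mean value property for eigenfunctions show that all invariant averages of $\phi$ over the disks $D(a,r_0)$ vanish. Yet $\widetilde\phi(0)=\pi s(1-s)/\sin(\pi s)\neq0$. So no spectral argument can get $\widetilde\psi(0)=0$ from boundedness and $\psi*\chi_{r_0}\equiv0$ alone.

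The Poisson weight is the extra structure you were looking for. It forces $\psi P(\cdot,\zeta)^2$ to decay like $(1-|w|)^2$ away from $\zeta$. Heuristically, the annihilated eigenfunctions, whose boundary order is $(1-|w|)^{\Re s}$ or $(1-|w|)^{1-\Re s}$, cannot do this. To exploit it, apply a Cayley map sending $\zeta$ to $\infty$. Then $|1-\overline\zeta w|^{-4}\dA(w)$ becomes a constant multiple of Euclidean area, and the disks $E_{\rho_0}(b)$ become all Euclidean disks $B((x,t),qt)$ with $q=2\rho_0/(1+\rho_0^2)$. The missing step is then precisely Lemma~\ref{lem:halfplane-unique}: a bounded $F$ on $\mathbb H$ with zero Euclidean integrals over all such disks vanishes.

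The paper proves this as follows. It regularizes $F$ (horizontal band-limiting and smoothing in the dilation variable). It then notes that the Euclidean disk averages $U(x,t,r)$ satisfy the Darboux equation $U_{rr}+\tfrac3rU_r=U_{xx}+U_{tt}$ and vanish on the timelike boundary $r=qt$ of the cone $\{0\le r\le qt\}$. Because $F$ is bounded, they have energy $O(t^2)$ at the apex $t=0$, and the inequality $E'\le C_0E$ forces $U\equiv0$. The paper reaches the half-plane by an affine blow-up (Lemma~\ref{lem:single-blowup}) and proves (v)$\Rightarrow$(ii) rather than going through the Berezin transform. With that lemma in hand, your M\"obius blow-up plus Axler--Zheng would also close the argument.

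Finally, your (iii)$\Rightarrow$(i) is a valid alternative to the paper's direct estimate $\norm{T_\varphi-T_{M_\rho\varphi}}\le C\rho\norm\varphi_\infty$, which avoids Axler--Zheng altogether. One step needs repair: $\widetilde{\widehat\varphi_r}=\widehat{(\widetilde\varphi)}_r$ is false for Lebesgue-area averages, which are not invariant convolutions. It holds for averages with respect to $(1-|z|^2)^{-2}\dA(z)$. These differ from $\widehat\varphi_r$ by $O(r)\norm\varphi_\infty$, which your $Cr_j\norm\varphi_\infty$ bound absorbs.
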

\begin{theorem}\label{thm:carleson}
Let $\varphi\in L^\infty(\D)$. 
\[
C_h\varphi(\theta)=\frac1{|S(\theta,h)|}\int_{S(\theta,h)}\varphi\dA.
\]Then $T_\varphi$ is compact on $\A$ if and only if
\begin{equation}\label{eq:boxcondition}
\lim_{h\downarrow0}\sup_{\theta\in\mathbb R}|C_h\varphi(\theta)|=0.
\end{equation}
More precisely, condition \eqref{eq:boxcondition} implies that, for every fixed $\rho\in(0,1)$,
\begin{equation}\label{eq:boximpliesdisk}
\lim_{|a|\to1}\frac1{|E_\rho(a)|}\int_{E_\rho(a)}\varphi\dA=0.
\end{equation}

\end{theorem}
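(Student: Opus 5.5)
The plan has two directions. The nontrivial one is \eqref{eq:boxcondition} $\Rightarrow$ \eqref{eq:boximpliesdisk}; once it is proved, Theorem~\ref{thm:main} ((ii)$\Leftrightarrow$(i)) gives compactness.

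For the converse, assume $T_\varphi$ is compact. By Theorem~\ref{thm:main}, \eqref{eq:main} then holds for every $\rho$. We write a box $S(\theta,h)$, up to a boundary layer of small relative area, as a union of a bounded number $N(\varepsilon)$ of pieces of a dyadic-type partition. Each piece is a ``top half'' of a sub-box, i.e.\ a hyperbolically bounded region.

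\emph{Step 1 (pieces to Bergman disks).} We do not need exact coverings by disks. Each hyperbolically bounded piece $Q$ with $\operatorname{dist}(Q,\partial\D)\to0$ can be approximated, up to relative area $\varepsilon$, by a finite disjoint union of small Bergman disks $D(a_k,r)$ plus a remainder. The number of disks depends only on $\varepsilon$ and the hyperbolic shape, not on the location. We obtain this by a Vitali-type packing in the hyperbolic metric, transported by $\sigma_a$ to a fixed compact model.

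\emph{Step 2 (sum the pieces).} By (ii) at the fixed radius $r$, the average of $\varphi$ over each of these disks tends to $0$ uniformly as the centers approach the boundary. The remainder contributes at most $\varepsilon\norm\varphi_\infty$. Summing with weights $|Q|/|S(\theta,h)|$ gives $\sup_\theta|C_h\varphi(\theta)|\le C\varepsilon$ for small $h$. The uniformity in $\theta$ comes from (ii) being uniform in $|a|\to1$.

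For \eqref{eq:boxcondition} $\Rightarrow$ \eqref{eq:boximpliesdisk}, fix $\rho$. For $|a|$ near $1$, with $h\approx 1-|a|$, the set $E_\rho(a)$ is comparable to a box of scale $\asymp(1-|a|)$. The key is to approximate the indicator $\chi_{E_\rho(a)}$ in $L^1$ by finite signed combinations of indicators of boxes $S(\theta_k,h_k)$.

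\emph{Step 1 (stable model).} Rescale by $z\mapsto(1-z e^{-i\arg a})/(1-|a|)$. As $|a|\to1$ this map sends $E_\rho(a)$ to a fixed limiting region, a Euclidean disk in the upper half-plane model. It sends boxes to rectangles $\{0<y<\eta,\ |x-x_0|<\eta\}$ up to errors that vanish as $|a|\to1$.

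\emph{Step 2 (generate rectangles).} Differences of nested boxes with a common angle, and differences of boxes with shifted angle, generate indicators of arbitrary rectangles $[x_1,x_2]\times[y_1,y_2]$ in the model. This is an inclusion–exclusion on boxes anchored at the boundary, which requires care since boxes have equal height and half-width. Using $S(\theta,h)\setminus S(\theta,h')$ and angular translates, one first obtains ``L-shaped'' pieces, then rectangles.

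\emph{Step 3 (approximate and conclude).} Any compact region in the model is approximated in measure by finitely many rectangles, with the count depending only on $\rho$ and $\varepsilon$. Each box average is $o(1)$ uniformly in $\theta$. The boxes all have scale $\gtrsim c(\rho,\varepsilon)(1-|a|)\to0$, and their sizes are comparable to $|E_\rho(a)|$, so
\[
\Bigl|\tfrac1{|E_\rho(a)|}\textstyle\int_{E_\rho(a)}\varphi\Bigr|\le C(\rho,\varepsilon)\sup_{\theta,\,h\le h(a)}|C_h\varphi(\theta)|+C\varepsilon\norm\varphi_\infty .
\]

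The main obstacle is Step 2: showing that signed combinations of square-shaped boundary-anchored boxes generate all rectangles with bounded complexity. I would first try the identity that a rectangle of height $[h_1,h_2]$ and width $2w$ equals a telescoping sum over a fine grid. In that sum, thin horizontal strips $S(\theta,h_2)\setminus S(\theta,h_1)$ are cut in the angular direction by subtracting translated boxes, with the width mismatch controlled as a small-measure error.
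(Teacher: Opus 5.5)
Your necessity argument is a workable alternative to the paper's. The paper instead pulls back by $\tau_n=e^{i\theta_n}\sigma_{1-h_n}$, gets $\varphi\circ\tau_n\to0$ weak-star from compactness, and uses $L^1$ convergence of the box weights $H_h\to H$, with no appeal to (ii) and no decomposition. One correction in your version: a Vitali packing uses disks of varying radii, since disjoint disks of a single radius cannot exhaust a piece up to an arbitrarily small fraction. You must therefore invoke (ii) for each of the finitely many radii used, which is harmless.

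The genuine gap is in sufficiency, exactly at the Step~2 you flag, and the fix you suggest does not work. The set $S(\theta,h_2)\setminus S(\theta,h_1)$ is not a thin horizontal strip. It contains the legs $\{se^{it}:1-h_1<s<1,\ h_1\le|t-\theta|<h_2\}$, which reach the boundary and carry roughly half of its area, so the mismatch is not a small error.

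More fundamentally, no finite inclusion--exclusion can produce rectangles. In the model, suppose $\mathbf 1_{[0,1)\times(0,1)}=\sum_k c_k\mathbf 1_{(x_k-H_k,x_k+H_k)\times(0,H_k)}$. Comparing horizontal slices just below and just above height $1$ gives $\mathbf 1_{[0,1)}=\sum_{H_k=1}c_k\mathbf 1_{(x_k-1,x_k+1)}$ a.e. This is impossible: $g\mapsto\int_{\mathbb R}g(u)e^{-i\pi u}\dd u$ annihilates every translate of $\mathbf 1_{(-1,1)}$ but takes the value $2/(i\pi)$ on $\mathbf 1_{[0,1)}$. Cancelling legs with shifted boxes only produces the chain $\mathbf 1_{[0,2)}-\mathbf 1_{[1,3)}+\mathbf 1_{[2,4)}-\cdots$. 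Its partial sums are $\mathbf 1_{[0,1)}\pm\mathbf 1_{[N,N+1)}$, which never converge in $L^1$. So $E_\rho(a)$ can be reached from boxes only through a genuinely limiting argument that mixes incommensurable heights, and your proposal supplies none.

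The missing ingredient is the uniqueness Lemma~\ref{lem:boxunique}. Suppose $F\in L^\infty(\mathbb H)$ has zero integral over every $(x-h,x+h)\times(0,h)$, and set $G_h(x)=\int_0^hF(x,t)\dd t$. Then:
\begin{itemize}
\item $G_h$ is $2h$-periodic with mean zero, and $\norm{G_h-G_k}_\infty\le\norm F_\infty|h-k|$.
\item The $m$-th Fourier coefficient of $G_h$ is a long-run mean of $G_he^{-im\pi x/h}$.
\item The long-run mean of $G_ke^{-im\pi x/h}$ vanishes when $k/h$ is irrational.
\item Comparing the two and letting $k\to h$ shows that all Fourier coefficients of $G_h$ vanish, so $F=0$.
\end{itemize}
By Hahn--Banach this is precisely the $L^1$ density of box combinations your Step~3 needs, after which your estimate does go through.

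The paper avoids explicit approximants altogether. Weak-star limits of the rescaled symbols $F_n$ annihilate every model box (Lemma~\ref{lem:boxblowup}, which is where the uniformity in $\theta$ in \eqref{eq:boxcondition} enters), hence vanish. The $L^1$ convergence of the rescaled disk weights $W_n\to W$ then yields \eqref{eq:boximpliesdisk}, and Theorem~\ref{thm:main} gives compactness.
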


\section{Preliminary lemmas}
Automorphism-induced localization is a standard tool in compactness criteria for Bergman-space Toeplitz operators; see~\cite{AZ1998,SZ1992,TV2018}.
We use the automorphism-induced unitary operator of Axler and Zheng in~\cite{AZ1998}, defined using the derivative of the disk automorphism:
$\sigma_a'=-k_a$, whereas the usual automorphism convention uses $k_a$.
This does not affect unitarity or the pairing identity below.
\begin{lemma}\label{lem:unitary}
For $a\in\D$, define
\[
(U_af)(w)=f(\sigma_a(w))\sigma_a'(w),\qquad f\in\A.
\]
Then $U_a$ is unitary, $U_a^2=I$, and, for all $f,g\in\A$,
\begin{equation}\label{eq:covariance}
\langle T_\varphi U_af,U_ag\rangle
=\int_\D\varphi(\sigma_a(w))f(w)\overline{g(w)}\dA(w).
\end{equation}
\end{lemma}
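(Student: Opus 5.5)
The plan is to verify the three assertions of Lemma~\ref{lem:unitary} directly from the change-of-variables formula for the area measure under the automorphism $\sigma_a$, which is an involution of $\D$ ($\sigma_a\circ\sigma_a=\id$) with real Jacobian $|\sigma_a'(w)|^2$.

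\emph{Isometry.} For $f\in\A$, $U_af$ is holomorphic on $\D$. Substituting $z=\sigma_a(w)$ gives
\[
\norm{U_af}_2^2=\int_\D|f(\sigma_a(w))|^2|\sigma_a'(w)|^2\dA(w)=\int_\D|f(z)|^2\dA(z).
\]
So $U_a$ maps $\A$ isometrically into $\A$.

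\emph{Involution.} I would differentiate $\sigma_a(\sigma_a(w))=w$ to get $\sigma_a'(\sigma_a(w))\sigma_a'(w)=1$. Then
\[
(U_a^2f)(w)=(U_af)(\sigma_a(w))\,\sigma_a'(w)=f(w)\,\sigma_a'(\sigma_a(w))\,\sigma_a'(w)=f(w).
\]
Hence $U_a^2=I$. In particular $U_a$ is surjective, so the isometry $U_a$ is unitary. One can check directly that $\sigma_a'(w)=-(1-|a|^2)/(1-\overline a w)^2=-k_a(w)$, which is the sign remark before the lemma. The sign plays no role, since only $|\sigma_a'|^2$ and the product $\sigma_a'(\sigma_a(w))\sigma_a'(w)$ enter.

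\emph{Pairing identity.} Since $U_ag\in\A$ and $P$ is the orthogonal projection, $\langle P(\varphi U_af),U_ag\rangle=\langle\varphi U_af,U_ag\rangle$. Therefore
\[
\langle T_\varphi U_af,U_ag\rangle=\int_\D\varphi(w)f(\sigma_a(w))\overline{g(\sigma_a(w))}\,|\sigma_a'(w)|^2\dA(w).
\]
Substituting $w=\sigma_a(u)$, the factor $|\sigma_a'|^2$ is absorbed as the Jacobian, and this becomes $\int_\D\varphi(\sigma_a(u))f(u)\overline{g(u)}\dA(u)$, which is \eqref{eq:covariance}. The integrals converge absolutely because $\varphi$ is bounded and $f,g\in L^2$.

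None of these steps is a real obstacle. The only point needing care is justifying the change of variables: $\sigma_a$ is a biholomorphic involution, so it is a $C^\infty$ diffeomorphism of $\D$, and its real Jacobian is $|\sigma_a'|^2$ by the Cauchy--Riemann equations.
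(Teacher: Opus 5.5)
Your proposal is correct and follows the paper's proof essentially step for step. Isometry comes from the area change of variables. Differentiating $\sigma_a\circ\sigma_a=\id$ gives $U_a^2=I$, hence surjectivity and unitarity. The pairing identity comes from the projection property, since $U_ag\in\A$, followed by the substitution $z=\sigma_a(w)$.
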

\begin{proof}
A direct calculation gives
\[
\sigma_a\circ\sigma_a=\id,
\qquad \sigma_a'(w)=-\frac{1-|a|^2}{(1-\overline a w)^2}.
\]
By the area change-of-variables formula,
\[
\norm{U_af}_2^2
=\int_\D|f(\sigma_a(w))|^2|\sigma_a'(w)|^2\dA(w)
=\int_\D|f(z)|^2\dA(z).
\]
Differentiating $\sigma_a(\sigma_a(w))=w$ yields
$\sigma_a'(\sigma_a(w))\sigma_a'(w)=1$, so $U_a^2=I$.
Thus $U_a$ is a surjective isometry and hence is unitary.

Since $U_ag\in\A$, the defining property of the orthogonal projection and the change of variables $z=\sigma_a(w)$ give
\begin{align*}
\langle T_\varphi U_af,U_ag\rangle
&=\int_\D\varphi(z)f(\sigma_a(z))
  \overline{g(\sigma_a(z))}|\sigma_a'(z)|^2\dA(z)\\
&=\int_\D\varphi(\sigma_a(w))f(w)\overline{g(w)}\dA(w).
\end{align*}
This proves \eqref{eq:covariance}.
\end{proof}

\begin{lemma}\label{lem:weak}
For every fixed analytic polynomial $p$, as $|a|\to1$,
\[
U_ap\rightharpoonup0\quad\text{in }\A.
\]
\end{lemma}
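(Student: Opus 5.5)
Since $U_a$ is unitary by Lemma~\ref{lem:unitary}, the family $\{U_ap\}_{a\in\D}$ is bounded in $\A$, with $\norm{U_ap}_2=\norm p_2$. Bounded sets in a Hilbert space are weakly relatively compact, so it suffices to test weak convergence against a dense subset of $\A$. We take the analytic polynomials $q$, which are dense in $\A$. Hence the plan is to show that for each polynomial $q$,
\[
\langle U_ap,q\rangle=\int_\D p(\sigma_a(w))\sigma_a'(w)\overline{q(w)}\dA(w)\longrightarrow0\qquad(|a|\to1),
\]
and then pass to a general $g\in\A$ by approximation: $|\langle U_ap,g\rangle|\le |\langle U_ap,q\rangle|+\norm p_2\norm{g-q}_2$.

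For the polynomial pairing, the estimate is direct. Write $M_p=\sup_{\D}|p|$ and $M_q=\sup_\D|q|$. Then
\[
|\langle U_ap,q\rangle|\le M_pM_q\int_\D|\sigma_a'(w)|\dA(w)=M_pM_q\int_\D\frac{1-|a|^2}{|1-\overline a w|^2}\dA(w).
\]
The last integral is the standard one: $\int_\D|1-\overline a w|^{-2}\dA(w)=\sum_{n\ge0}|a|^{2n}/(n+1)=|a|^{-2}\log\frac1{1-|a|^2}$, obtained by expanding in the orthogonal monomials. Therefore the bound is
\[
M_pM_q\,\frac{(1-|a|^2)}{|a|^2}\log\frac1{1-|a|^2},
\]
which tends to $0$ as $|a|\to1$, no matter how $a$ approaches the circle.

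Combining the two steps, given $g\in\A$ and $\varepsilon>0$, first choose a polynomial $q$ with $\norm{p}_2\norm{g-q}_2<\varepsilon$. Then take $|a|$ close to $1$ so that the polynomial pairing is less than $\varepsilon$. This gives $\langle U_ap,g\rangle\to0$.

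There is no serious obstacle here. The only point needing care is the log-integral estimate for $\int|\sigma_a'|\dA$, where one must not try the cruder bound $|\sigma_a'|\le\frac{1+|a|}{1-|a|}$, which fails. An alternative route is to bound $|\langle U_ap,q\rangle|$ by Cauchy--Schwarz on $\{|w|>R\}$ and $\{|w|\le R\}$. On the small disk $\sigma_a'\to0$ uniformly, and on the annulus one uses $\int_{|w|>R}|U_ap|^2\le\norm p_2^2$ together with the smallness of $\norm{q}_{L^2(|w|>R)}$. Either argument suffices.
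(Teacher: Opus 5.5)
Your proof is correct. It shares the paper's skeleton: the uniform bound $\norm{U_ap}_2=\norm p_2$ from unitarity, followed by testing against a dense subset of $\A$. The difference lies in the choice of test vectors, and hence in the key estimate.

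The paper pairs against the reproducing kernels $K_b$, so that $\langle U_ap,K_b\rangle=(U_ap)(b)$ is a point evaluation. It then needs only the local bound $\sup_{|w|\le R}|U_ap(w)|\le\norm p_{L^\infty(\D)}(1-|a|^2)/(1-R)^2$. Density of the kernels is the one-line remark that a function orthogonal to every $K_b$ vanishes.

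You pair against polynomials, which forces a global estimate, namely the exact integral $\int_\D|1-\overline a w|^{-2}\dA(w)=|a|^{-2}\log\frac1{1-|a|^2}$; your series computation of it is right. In exchange you prove something stronger: $\norm{U_ap}_{L^1(\D)}\le\norm p_{L^\infty(\D)}(1-|a|^2)|a|^{-2}\log\frac1{1-|a|^2}$. That is, $U_ap\to0$ in $L^1$ with an explicit rate, which kills the pairing with every bounded test function at once.

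Your alternative splitting at $|w|=R$ is essentially the paper's mechanism (uniform decay on compact sets). It even avoids the density step, since the same estimate applies directly to any $g\in\A$.

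One wording point: the reduction to a dense set comes from the uniform norm bound together with the inequality $|\langle U_ap,g\rangle|\le|\langle U_ap,q\rangle|+\norm p_2\norm{g-q}_2$, not from weak relative compactness. Since you write that inequality down, nothing is missing.
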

\begin{proof}
Since $U_a$ is unitary, $\norm{U_ap}_2=\norm p_2$. For every fixed $R\in(0,1)$,
\[
\sup_{|w|\le R}|U_ap(w)|
\le\norm p_{L^\infty(\D)}\frac{1-|a|^2}{(1-R)^2}\longrightarrow0.
\]
The reproducing kernel of $\A$ is $K_b(w)=(1-\overline b w)^{-2}$; see~\cite[Chapter~1]{HKZ2000}. It satisfies
$\langle f,K_b\rangle=f(b)$. Thus, for every $b\in\D$,
\[
\langle U_ap,K_b\rangle=(U_ap)(b)\longrightarrow0.
\]
The linear span of the reproducing kernels is dense in $\A$: if $f$ is orthogonal to every $K_b$,
then $f(b)=0$ for all $b\in\D$, and hence $f=0$.
Together with the uniform boundedness of $\norm{U_ap}_2$, density implies that $\langle U_ap,g\rangle\to0$ for every $g\in\A$, giving the asserted weak convergence.

\end{proof}
\begin{lemma}\label{lem:weakstar}
If $\varphi\in L^\infty(\D)$ and $T_\varphi$ is compact, then
\begin{equation}\label{eq:weakstar}
\varphi\circ\sigma_a\overset{w^*}{\longrightarrow}0
\quad\text{in }L^\infty(\D),\qquad |a|\to1.
\end{equation}
Equivalently, for every fixed $h\in L^1(\D,\mathrm dA)$,
\begin{equation}\label{eq:test}
\int_\D\varphi(\sigma_a(w))h(w)\dA(w)\longrightarrow0.
\end{equation}
\end{lemma}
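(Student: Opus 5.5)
The plan is to test \eqref{eq:covariance} against polynomials, where compactness gives decay, and then pass to general $L^1$ functions by density.

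\textbf{Step 1: polynomial test functions.} Fix analytic polynomials $p,q$. Lemma~\ref{lem:weak} gives $U_ap\rightharpoonup0$ as $|a|\to1$. A compact operator maps weakly null nets to norm null nets, so $\norm{T_\varphi U_ap}_2\to0$. Since $U_a$ is unitary, $\norm{U_aq}_2=\norm q_2$ stays bounded. Hence
\[
\langle T_\varphi U_ap,U_aq\rangle\to0 .
\]
By \eqref{eq:covariance} this inner product equals
\[
\int_\D\varphi(\sigma_a(w))\,p(w)\overline{q(w)}\dA(w).
\]
Therefore \eqref{eq:test} holds for every $h=p\overline q$. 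By linearity it holds for every $h$ in the span $\mathcal P$ of the functions $w^m\overline w^{\,n}$ with $m,n\ge0$, i.e.\ all polynomials in $w$ and $\overline w$.

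\textbf{Step 2: extension to all of $L^1$.} The family $\{\varphi\circ\sigma_a\}$ is bounded in $L^\infty$ by $\norm\varphi_\infty$. So for $h\in L^1$ and $P\in\mathcal P$,
\[
\Bigl|\int_\D(\varphi\circ\sigma_a)h\dA\Bigr|\le\norm\varphi_\infty\norm{h-P}_{L^1}+\Bigl|\int_\D(\varphi\circ\sigma_a)P\dA\Bigr|.
\]
It is therefore enough to know that $\mathcal P$ is dense in $L^1(\D,\mathrm dA)$. Continuous functions with compact support in $\D$ are dense in $L^1$. Each such function extends continuously to $\overline\D$, and on the compact set $\overline\D$ it is a uniform limit of polynomials in $x,y$, equivalently in $w,\overline w$, by Stone--Weierstrass. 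Uniform convergence on $\overline\D$ implies $L^1$ convergence because $|\D|=1$. Choosing $P$ first and then letting $|a|\to1$ gives \eqref{eq:test}.

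\textbf{Step 3: the equivalence.} $L^\infty=(L^1)^*$, and weak-$*$ convergence is by definition \eqref{eq:test} for all $h\in L^1$. So \eqref{eq:weakstar} and \eqref{eq:test} say the same thing.

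The only real issue is Step 2: \eqref{eq:covariance} only gives the products $f\overline g$ with $f,g$ analytic. The uniform $L^\infty$ bound on $\varphi\circ\sigma_a$ combined with density of the span of $p\overline q$ (Stone--Weierstrass) handles it. A minor point is that the limit $|a|\to1$ is over a net, but weak-to-norm continuity of compact operators holds for bounded nets, so this causes no problem.
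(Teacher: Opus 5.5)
Your proposal is correct and follows the paper's proof essentially step for step. You test the covariance identity against analytic polynomials $p,q$ and use compactness to get decay. You then extend to all $h\in L^1$ by Stone--Weierstrass density of mixed polynomials together with the uniform bound $\norm{\varphi\circ\sigma_a}_\infty\le\norm{\varphi}_\infty$. Your side remark about bounded nets is harmless, since the paper effectively argues along sequences $|a_j|\to1$.
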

\begin{proof}
For any fixed analytic polynomial $p$, Lemma~\ref{lem:weak} and compactness give
\[
\norm{T_\varphi U_ap}_2\longrightarrow0.
\]
Indeed, a compact operator maps bounded weakly null sequences to norm-null sequences.
Taking another analytic polynomial $q$ and applying Lemma~\ref{lem:unitary}, we obtain
\begin{align*}
\abs{\int_\D\varphi(\sigma_a(w))p(w)\overline{q(w)}\dA(w)}
&=|\langle T_\varphi U_ap,U_aq\rangle|\\
&\le\norm{T_\varphi U_ap}_2\norm q_2\longrightarrow0.
\end{align*}
In particular, taking $p(w)=w^m$ and $q(w)=w^n$, we see that, for every $m,n\ge0$,
\[
\int_\D\varphi(\sigma_a(w))w^m\overline w^{\,n}\dA(w)\longrightarrow0.
\]
By linearity, for every fixed mixed polynomial
$Q(w,\overline w)=\sum_{m,n}c_{mn}w^m\overline w^{\,n}$, we have
\begin{equation}\label{eq:polynomial}
\int_\D\varphi(\sigma_a(w))Q(w,\overline w)\dA(w)\longrightarrow0.
\end{equation}

The algebra of mixed polynomials contains the constants, separates the points of $\overline\D$, and is closed under complex conjugation. By the Stone--Weierstrass theorem, it is uniformly dense in $C(\overline\D)$. The complex-valued case follows by approximating the real and imaginary parts separately.
Continuous functions are dense in $L^1$ for finite Lebesgue measure. Hence mixed polynomials are dense in this $L^1$ space.

Write $M=\norm\varphi_\infty$. For each fixed $a$, the map $\sigma_a$ is a smooth diffeomorphism preserving null sets, so the pullback symbol is well defined almost everywhere and satisfies
\[
\norm{\varphi\circ\sigma_a}_\infty\le M.
\]
Given $h\in L^1$ and $\varepsilon>0$, choose a mixed polynomial $Q$ such that
$\norm{h-Q}_1<\varepsilon$. Then
\begin{align*}
\abs{\int_\D\varphi(\sigma_a(w))h(w)\dA(w)}
&\le\abs{\int_\D\varphi(\sigma_a(w))Q(w,\overline w)\dA(w)}
  +M\varepsilon.
\end{align*}
By \eqref{eq:polynomial}, the upper limit of the left-hand side as $|a|\to1$ is at most $M\varepsilon$.
Letting $\varepsilon\downarrow0$ proves \eqref{eq:test}.
\end{proof}
\section{Proof of Theorem 1.1}
\subsection{Theorem 1.1: \texorpdfstring{\textnormal{(i)}$\Rightarrow$\textnormal{(ii)}}{(i) implies (ii)}}\label{sec:necessity}
\begin{proof}
Suppose that $T_\varphi$ is compact.
The set $E_\rho(a)$ is a Euclidean disk whose center and radius are, respectively,
\[
c_{a,\rho}=\frac{(1-\rho^2)a}{1-\rho^2|a|^2},
\qquad
R_{a,\rho}=\frac{\rho(1-|a|^2)}{1-\rho^2|a|^2}.
\]
Hence
\begin{equation}\label{eq:area}
|E_\rho(a)|=R_{a,\rho}^2
=\frac{\rho^2(1-|a|^2)^2}{(1-\rho^2|a|^2)^2}.
\end{equation}
Since $E_\rho(a)=\sigma_a(\{|w|<\rho\})$, a change of variables gives
\begin{align}
\frac1{|E_\rho(a)|}\int_{E_\rho(a)}\varphi(z)\dA(z)
&=\frac1{|E_\rho(a)|}\int_{|w|<\rho}
 \varphi(\sigma_a(w))\frac{(1-|a|^2)^2}{|1-\overline a w|^4}\dA(w)\notag\\
&=\int_\D\varphi(\sigma_a(w))H_{a,\rho}(w)\dA(w),\label{eq:average}
\end{align}
where
\begin{equation}\label{eq:weight}
H_{a,\rho}(w)=\mathbf1_{\{|w|<\rho\}}
\frac{(1-\rho^2|a|^2)^2}{\rho^2|1-\overline a w|^4}.
\end{equation}
Since $H_{a,\rho}$ depends on the center $a$, this expression cannot yet be combined with Lemma~\ref{lem:weakstar}; we therefore freeze the test function by passing to a fixed limit 
$H_{\zeta,\rho}.$

Suppose, to the contrary, that the average over $E_\rho(a)$ does not tend to zero as $|a|\to1$ for the fixed $\rho$ chosen above. Then there exist $\varepsilon_0>0$ and a sequence $a_j\in\D$
 such that $|a_j|\to1$ and
\begin{equation}\label{eq:contradiction}
\abs{\frac1{|E_\rho(a_j)|}\int_{E_\rho(a_j)}\varphi\dA}
\ge\varepsilon_0\qquad\text{for all }j.
\end{equation}
By compactness of the closed disk, we may pass to a subsequence and assume that $a_j\to\zeta$, where $|\zeta|=1$.
Define the fixed test function
\[
H_{\zeta,\rho}(w)=\mathbf1_{\{|w|<\rho\}}
\frac{(1-\rho^2)^2}{\rho^2|1-\overline\zeta w|^4}.
\]
For $|w|<\rho$,
\[
|1-\overline a_j w|\ge1-\rho,
\qquad |1-\overline\zeta w|\ge1-\rho.
\]
Consequently, $H_{a_j,\rho}$ converges pointwise to $H_{\zeta,\rho}$, and
\[
0\le H_{a_j,\rho}(w),\ H_{\zeta,\rho}(w)
\le\frac{\mathbf1_{\{|w|<\rho\}}}{\rho^2(1-\rho)^4}.
\]
The right-hand side is integrable, so the dominated convergence theorem yields
\begin{equation}\label{eq:L1}
\norm{H_{a_j,\rho}-H_{\zeta,\rho}}_1\longrightarrow0.
\end{equation}
In particular, $H_{\zeta,\rho}\in L^1(\D)$.

By \eqref{eq:average},
\begin{align*}
\abs{\frac1{|E_\rho(a_j)|}\int_{E_\rho(a_j)}\varphi\dA}
&\le\abs{\int_\D\varphi(\sigma_{a_j}(w))H_{\zeta,\rho}(w)\dA(w)}\\
&\quad+\norm\varphi_\infty\norm{H_{a_j,\rho}-H_{\zeta,\rho}}_1.
\end{align*}
The first term tends to zero by Lemma~\ref{lem:weakstar}, and the second by \eqref{eq:L1}, contradicting \eqref{eq:contradiction}.
Thus \eqref{eq:main} holds. Taking $\rho=\tanh r$ gives \eqref{eq:bergman}.
\end{proof}
\subsection{Theorem 1.1: \texorpdfstring{\textnormal{(ii)}$\Rightarrow$\textnormal{(iii)}$\Rightarrow$\textnormal{(i)}}{(ii) implies (iii) implies (i)}}
\label{sec:approx}
For $0<\rho<1$, define
\[
M_\rho\varphi(a)=\frac1{|E_\rho(a)|}\int_{E_\rho(a)}\varphi(w)\dA(w).
\]
The averaging kernel is measurable, so $M_\rho\varphi$ is measurable, and
$\norm{M_\rho\varphi}_\infty\le\norm\varphi_\infty$.
In this subsection, the estimates concern the local oscillation of analytic test functions. Throughout, $C$ denotes an absolute constant that may change from line to line but is independent of $\rho\in(0,1/32]$, the center, and the functions involved. We need the following lemmas to complete the proof.

\begin{lemma}\label{lem:mass}
Let $\delta(w)=1-|w|^2$, and set
\[
q_\rho(z,w)=\frac{\mathbf1_{E_\rho(z)}(w)}{|E_\rho(z)|},
\qquad m_\rho(w)=\int_\D q_\rho(z,w)\dA(z).
\]
Then $\int_\D q_\rho(z,w)\dA(w)=1$. Moreover, for $0<\rho\le1/32$,
\begin{equation}\label{eq:mass}
\sup_{w\in\D}|m_\rho(w)-1|\le C\rho,
\qquad 0\le m_\rho(w)\le2.
\end{equation}
\end{lemma}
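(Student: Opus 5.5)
The first identity is immediate from the definition: for fixed $z$, $q_\rho(z,\cdot)$ is the normalized indicator of $E_\rho(z)$, so $\int_\D q_\rho(z,w)\dA(w)=|E_\rho(z)|/|E_\rho(z)|=1$. For the mass function, I will use the symmetry $w\in E_\rho(z)\iff z\in E_\rho(w)$, which holds because $|\sigma_z(w)|=|\sigma_w(z)|$. This lets me write
\[
m_\rho(w)=\int_{E_\rho(w)}\frac{\dA(z)}{|E_\rho(z)|},
\]
so $m_\rho(w)$ is the average of $|E_\rho(w)|/|E_\rho(z)|$ over $z\in E_\rho(w)$. The whole estimate therefore reduces to showing that this ratio is $1+O(\rho)$ uniformly for $z\in E_\rho(w)$.

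By \eqref{eq:area}, $|E_\rho(z)|=\rho^2(1-|z|^2)^2/(1-\rho^2|z|^2)^2$. The denominator factor $(1-\rho^2|z|^2)^{-2}$ lies in $[1,(1-\rho^2)^{-2}]$, so it equals $1+O(\rho^2)$ uniformly in $z$. The ratio thus becomes
\[
\frac{|E_\rho(w)|}{|E_\rho(z)|}=(1+O(\rho^2))\,\frac{\delta(w)^2}{\delta(z)^2}.
\]
It remains to bound $\delta(z)/\delta(w)$ for $z\in E_\rho(w)$. I will use the standard identity
\[
1-|\sigma_w(z)|^2=\frac{\delta(w)\delta(z)}{|1-\overline w z|^2}.
\]
Together with $|1-\overline wz|=|1-\overline w z|$ written relative to $\delta(w)$, this should give $\delta(z)/\delta(w)\in[\frac{1-\rho}{1+\rho},\frac{1+\rho}{1-\rho}]$. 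Alternatively, I can read this off directly from the Euclidean description: $E_\rho(w)$ has center $c$ and radius $R$, so $|z|$ ranges over $[|c|-R,|c|+R]$, and computing $1-(|c|\pm R)^2$ explicitly gives $\delta(w)\frac{(1\mp\rho|w|)^2}{1-\rho^2|w|^2}$-type expressions. Either route yields $\delta(z)/\delta(w)=1+O(\rho)$ uniformly.

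Combining these bounds, the ratio is $1+O(\rho)$ with an absolute constant, once $\rho\le1/32$ keeps all the factors like $(1-\rho)^{-4}$ bounded. Averaging over $z\in E_\rho(w)$ then gives $|m_\rho(w)-1|\le C\rho$. The bound $0\le m_\rho\le2$ follows from the explicit ratio bound $\bigl(\tfrac{1+\rho}{1-\rho}\bigr)^4(1-\rho^2)^{-2}\le2$, which holds for $\rho\le1/32$ since $(33/31)^4\approx1.28$. The only delicate step is getting the two-sided comparison of $\delta(z)$ and $\delta(w)$ with constants of the form $1\pm O(\rho)$ rather than merely bounded, and that is exactly what the explicit Möbius identity provides.
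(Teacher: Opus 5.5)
Your proposal is correct and follows the paper's proof essentially step for step: you use the symmetry $w\in E_\rho(z)\iff z\in E_\rho(w)$ to write $m_\rho(w)$ as the average of $|E_\rho(w)|/|E_\rho(z)|$ over $E_\rho(w)$, apply the area formula, and use the two-sided bound $\frac{1-\rho}{1+\rho}\le\delta(z)/\delta(w)\le\frac{1+\rho}{1-\rho}$ to get $(1-\rho)^4\le m_\rho\le(1-\rho)^{-4}$. Your sentence about $|1-\overline wz|$ is garbled; the step you need, which the paper makes explicit, is $z=\sigma_w(u)$ with $|u|<\rho$, giving $1-\overline wz=\delta(w)/(1-\overline wu)$ and hence $\delta(z)/\delta(w)=(1-|u|^2)/|1-\overline wu|^2$. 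Also, squaring the $\delta$-ratio only needs exponent $2$ on $\frac{1+\rho}{1-\rho}$, not $4$, though your cruder bound is still valid and still $\le 2$.
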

\begin{proof}
The first identity follows immediately from the definition. By symmetry of the pseudohyperbolic distance,
\[
m_\rho(w)=\int_{E_\rho(w)}\frac{\dA(z)}{|E_\rho(z)|}.
\]
If $z\in E_\rho(w)$, write $z=\sigma_w(u)$ with $|u|<\rho$. Then
\[
\frac{\delta(z)}{\delta(w)}=\frac{1-|u|^2}{|1-\overline w u|^2},
\qquad
\frac{1-\rho}{1+\rho}\le\frac{\delta(z)}{\delta(w)}
\le\frac{1+\rho}{1-\rho}.
\]
By the area formula \eqref{eq:area},
\[
\frac{|E_\rho(w)|}{|E_\rho(z)|}
=\left(\frac{\delta(w)}{\delta(z)}\right)^2
 \left(\frac{1-\rho^2|z|^2}{1-\rho^2|w|^2}\right)^2.
\]
The second ratio on the right lies in $[1-\rho^2,(1-\rho^2)^{-1}]$, so
\[
(1-\rho)^4\le\frac{|E_\rho(w)|}{|E_\rho(z)|}\le(1-\rho)^{-4}.
\]
Taking the normalized area average over $z\in E_\rho(w)$ gives
\[
(1-\rho)^4\le m_\rho(w)\le(1-\rho)^{-4}.
\]
For $\rho\le1/32$, this implies \eqref{eq:mass}.
\end{proof}

\begin{lemma}\label{lem:oscillation}
For $f\in\A$, define the nonnegative function
\[
B_f(w)^2=\frac1{\delta(w)^2}
\int_{|u-w|<\delta(w)/4}|f(u)|^2\dA(u).
\]
Then
\begin{equation}\label{eq:Bglobal}
\int_\D B_f(w)^2\dA(w)\le C\norm f_2^2.
\end{equation}
If $0<\rho\le1/32$ and $z\in E_\rho(w)$, then
\begin{align}
|f(z)|+|f(w)|&\le C B_f(w),\label{eq:Bpoint}\\
|f(z)-f(w)|&\le C\rho B_f(w).\label{eq:Bosc}
\end{align}
\end{lemma}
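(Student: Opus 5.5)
We prove the three estimates of Lemma~\ref{lem:oscillation} in order. The tools are the sub-mean value property and the Cauchy estimates on Euclidean disks of radius comparable to $\delta(w)$, together with a covering count for the integral bound.

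\textbf{Pointwise bounds.} Fix $w\in\D$ and write $r_w=\delta(w)/4$, so that $\Delta_w:=\{|u-w|<r_w\}$ is the disk defining $B_f(w)$. Note $r_w\le(1-|w|)/2$, hence $\Delta_w\subset\D$.
\begin{itemize}
\item For $z\in E_\rho(w)$, the Euclidean description used in \eqref{eq:area} shows $E_\rho(w)$ lies within distance $C\rho\,\delta(w)$ of $w$. For $\rho\le1/32$ this gives $|z-w|\le r_w/4$.
\item The disk of radius $r_w/2$ about $z$ is then contained in $\Delta_w$. By subharmonicity of $|f|^2$,
\[
|f(z)|^2\le\frac{1}{(r_w/2)^2}\int_{|u-z|<r_w/2}|f|^2\dA\le\frac{64}{\delta(w)^2}\int_{\Delta_w}|f|^2\dA,
\]
using normalized area, under which a disk of radius $s$ has area $s^2$. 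This is $CB_f(w)^2$.
\item The same argument with $z=w$ gives \eqref{eq:Bpoint}.
\end{itemize}

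\textbf{Oscillation bound.} By the Cauchy estimate, for $|\zeta-w|\le r_w/4$ we have
\[
|f'(\zeta)|\le \frac{C}{r_w}\sup_{|u-w|\le r_w/2}|f(u)|.
\]
That supremum is at most $CB_f(w)$, by the sub-mean value argument on disks of radius $r_w/2$ around points of $\{|u-w|\le r_w/2\}$, all of which lie in $\Delta_w$. Integrating $f'$ along the segment from $w$ to $z$ gives
\[
|f(z)-f(w)|\le|z-w|\,\frac{C B_f(w)}{\delta(w)}\le C\rho B_f(w),
\]
since $|z-w|\le C\rho\delta(w)$. This is \eqref{eq:Bosc}.

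\textbf{Global bound.} By Fubini,
\[
\int_\D B_f(w)^2\dA(w)=\int_\D|f(u)|^2\left(\int_{\{w:|u-w|<\delta(w)/4\}}\frac{\dA(w)}{\delta(w)^2}\right)\dA(u).
\]
It remains to bound the inner integral uniformly in $u$.
\begin{itemize}
\item If $|u-w|<\delta(w)/4$, then $\bigl||u|-|w|\bigr|<(1-|w|)/2$. From this, $\delta(u)$ and $\delta(w)$ are comparable with absolute constants.
\item Hence the $w$-region is contained in a disk about $u$ of radius $C\delta(u)$, where the integrand is $\le C\delta(u)^{-2}$. The inner integral is therefore bounded by an absolute constant, giving \eqref{eq:Bglobal}.
\end{itemize}

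The main care needed is bookkeeping of constants: confirming $|z-w|\le r_w/4$ for $\rho\le1/32$, and the comparability $\delta(u)\asymp\delta(w)$. The first comes from the explicit center and radius $c_{w,\rho}$, $R_{w,\rho}$. We have
\[
|c_{w,\rho}-w|+R_{w,\rho}\le\frac{\rho^2|w|\delta(w)}{1-\rho^2}+\frac{\rho\,\delta(w)}{1-\rho^2}\le 2\rho\,\delta(w),
\]
and $2\rho\le1/16$ when $\rho\le1/32$. No deeper obstacle is expected.
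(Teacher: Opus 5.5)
Your proof is correct and follows essentially the same route as the paper: local sup and derivative bounds on a Euclidean disk of radius comparable to $\delta(w)$, integration of $f'$ along the segment $[w,z]$ using $|z-w|\le 2\rho\,\delta(w)$, and Tonelli with $\delta(u)\asymp\delta(w)$ for the global bound. The only difference is cosmetic. You obtain the local bounds from the sub-mean-value property of $|f|^2$ and the Cauchy estimate, whereas the paper rescales to the unit disk and applies Cauchy--Schwarz to the power-series form of the Bergman norm.
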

\begin{proof}
Since $\delta(w)/4<1-|w|$, the Euclidean disk in this definition is contained in $\D$.
Set
\[
h(\xi)=f\left(w+\frac{\delta(w)}4\xi\right),\qquad |\xi|<1.
\]
A change of variables gives $\norm h_2^2=16B_f(w)^2$. If
$h(\xi)=\sum_{n\ge0}c_n\xi^n$, orthogonality of the monomials in the Bergman space gives
$\norm h_2^2=\sum_{n\ge0}|c_n|^2/(n+1)$.
By the Cauchy--Schwarz inequality,
\begin{align*}
|h(\xi)|&\le\norm h_2
 \left(\sum_{n\ge0}(n+1)|\xi|^{2n}\right)^{1/2},\\
|h'(\xi)|&\le\norm h_2
 \left(\sum_{n\ge1}n^2(n+1)|\xi|^{2n-2}\right)^{1/2}.
\end{align*}
Both series are uniformly bounded for $|\xi|\le1/2$. Hence
\begin{equation}\label{eq:derivative}
\sup_{|v-w|\le\delta(w)/8}
\bigl(|f(v)|+\delta(w)|f'(v)|\bigr)\le C B_f(w).
\end{equation}
If $z=\sigma_w(u)$ with $|u|<\rho$, then
\[
|z-w|=\frac{|u|\delta(w)}{|1-\overline w u|}
\le\frac{\rho}{1-\rho}\delta(w)\le2\rho\delta(w).
\]
When $\rho\le1/32$, the segment $[w,z]$ lies in the disk appearing in \eqref{eq:derivative}.
Integrating the derivative along this segment gives \eqref{eq:Bosc}, while the pointwise estimate gives \eqref{eq:Bpoint}.

By Tonelli's theorem~\cite[Theorem~5.28]{Axler2020},
\[
\int_\D B_f(w)^2\dA(w)
=\int_\D|f(u)|^2
\left[\int_{\{|u-w|<\delta(w)/4\}}\frac{\dA(w)}{\delta(w)^2}\right]\dA(u).
\]
If $|u-w|<\delta(w)/4$, then
\[
|\delta(u)-\delta(w)|\le2|u-w|<\delta(w)/2,
\]
Thus $\tfrac23\delta(u)<\delta(w)<2\delta(u)$ and
$|u-w|<\delta(u)/2$. Therefore the integral in brackets is at most
\[
\frac9{4\delta(u)^2}\int_{|w-u|<\delta(u)/2}\dA(w)
\le\frac9{16}.
\]
This proves \eqref{eq:Bglobal}.
\end{proof}

\begin{lemma}\label{lem:approx}
There is an absolute constant $C>0$ such that, for every $\varphi\in L^\infty(\D)$ and
every $0<\rho\le1/32$,
\begin{equation}\label{eq:normapprox}
\norm{T_\varphi-T_{M_\rho\varphi}}\le C\rho\norm\varphi_\infty.
\end{equation}
\end{lemma}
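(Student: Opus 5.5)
We estimate the bilinear form: for $f,g\in\A$,
\[
\langle (T_\varphi-T_{M_\rho\varphi})f,g\rangle=\int_\D\varphi(w)f(w)\overline{g(w)}\dA(w)-\int_\D M_\rho\varphi(z)f(z)\overline{g(z)}\dA(z),
\]
and show that its modulus is at most $C\rho\norm\varphi_\infty\norm f_2\norm g_2$. First expand the second term with the kernel $q_\rho$ of Lemma~\ref{lem:mass}. By Fubini (everything is bounded by $\norm\varphi_\infty|f(z)||g(z)|q_\rho(z,w)$, which is integrable since $\int q_\rho(z,w)\dA(w)=1$),
\[
\int_\D M_\rho\varphi(z)f(z)\overline{g(z)}\dA(z)=\int_\D\varphi(w)\left[\int_\D q_\rho(z,w)f(z)\overline{g(z)}\dA(z)\right]\dA(w).
\]
The difference then splits as $I_1+I_2$, where
\[
I_1=\int_\D\varphi(w)f(w)\overline{g(w)}\bigl(1-m_\rho(w)\bigr)\dA(w)
\]
and
\[
I_2=\int_\D\varphi(w)\int_\D q_\rho(z,w)\bigl(f(w)\overline{g(w)}-f(z)\overline{g(z)}\bigr)\dA(z)\dA(w).
\]

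The term $I_1$ is immediate from \eqref{eq:mass}: $|I_1|\le C\rho\norm\varphi_\infty\norm f_2\norm g_2$.

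For $I_2$, note that $q_\rho(z,w)\ne0$ forces $w\in E_\rho(z)$, equivalently $z\in E_\rho(w)$. We write
\[
f(w)\overline{g(w)}-f(z)\overline{g(z)}=\bigl(f(w)-f(z)\bigr)\overline{g(w)}+f(z)\bigl(\overline{g(w)}-\overline{g(z)}\bigr).
\]
Applying \eqref{eq:Bosc} and \eqref{eq:Bpoint} with centre $w$, the modulus of this difference is bounded by $C\rho B_f(w)B_g(w)$. The integrand no longer depends on $z$ except through $q_\rho$. Integrating in $z$ produces the factor $m_\rho(w)\le2$. Hence
\[
|I_2|\le C\rho\norm\varphi_\infty\int_\D B_f(w)B_g(w)\dA(w)\le C\rho\norm\varphi_\infty\norm f_2\norm g_2,
\]
by Cauchy--Schwarz and \eqref{eq:Bglobal}.

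Combining the two bounds and taking the supremum over unit vectors $f,g$ gives \eqref{eq:normapprox}. The only delicate point is making sure every oscillation estimate is centred at the integration variable $w$, which is the variable controlled by \eqref{eq:Bglobal}. This is exactly why we use the symmetry $z\in E_\rho(w)$ and the bound on $m_\rho$ rather than the normalization in $w$. The rest is bookkeeping of constants, uniform in $\rho\le1/32$.
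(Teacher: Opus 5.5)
Your proof is correct and is essentially the paper's argument. It uses the same Fubini rewriting of $\langle T_{M_\rho\varphi}f,g\rangle$ through the kernel $q_\rho$. It then makes the same split into a mass-defect term, controlled by \eqref{eq:mass}, and an oscillation term, controlled by \eqref{eq:Bosc}, \eqref{eq:Bpoint}, $m_\rho\le2$, Cauchy--Schwarz and \eqref{eq:Bglobal}; the only differences are the overall sign and the order in which you telescope $f\overline g$.
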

\begin{proof}
Take arbitrary $f,g\in\A$, and write $F(z)=f(z)\overline{g(z)}$.
By Lemma~\ref{lem:oscillation}, when $z\in E_\rho(w)$,
\begin{align}
|F(z)-F(w)|
&\le |f(z)-f(w)|\,|g(z)|+|f(w)|\,|g(z)-g(w)|\notag\\
&\le C\rho B_f(w)B_g(w).\label{eq:productosc}
\end{align}
The definition of the averaging kernel gives
\[
\langle T_{M_\rho\varphi}f,g\rangle
=\int_\D\varphi(w)\left[\int_\D q_\rho(z,w)F(z)\dA(z)\right]\dA(w).
\]
Fubini's theorem  applies, since
\[
\int_\D\int_\D |\varphi(w)|q_\rho(z,w)|F(z)|\dA(w)\dA(z)
\le\norm\varphi_\infty\norm f_2\norm g_2<\infty .
\]
We therefore obtain the decomposition
\begin{align}
\langle(T_{M_\rho\varphi}-T_\varphi)f,g\rangle
={}&\int_\D\varphi(w)\int_\D q_\rho(z,w)
 [F(z)-F(w)]\dA(z)\dA(w)\notag\\
&+\int_\D\varphi(w)[m_\rho(w)-1]F(w)\dA(w).
\label{eq:spliterror}
\end{align}
By \eqref{eq:productosc}, the bound $m_\rho\le2$, and \eqref{eq:Bglobal}, the absolute value of the first term is at most
\[
C\rho\norm\varphi_\infty\int_\D B_f(w)B_g(w)\dA(w)
\le C\rho\norm\varphi_\infty\norm f_2\norm g_2.
\]
By \eqref{eq:mass}, the absolute value of the second term is at most
\[
C\rho\norm\varphi_\infty\int_\D|f(w)g(w)|\dA(w)
\le C\rho\norm\varphi_\infty\norm f_2\norm g_2.
\]
Taking the supremum over unit vectors $f,g$ gives \eqref{eq:normapprox}.
\end{proof}
\begin{lemma}\label{lem:vanishing}
Let $\psi\in L^\infty(\mathbb D)$. If
\[
\lim_{R\uparrow1}
\operatorname*{ess\,sup}_{R<|z|<1}|\psi(z)|=0,
\]
then $T_\psi$ is compact on $A^2(\mathbb D)$.
\end{lemma}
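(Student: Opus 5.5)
The plan is to approximate $T_\psi$ in operator norm by compact operators, namely Toeplitz operators whose symbols are supported on compact subdisks. For $0<R<1$ we split $\psi=\psi\mathbf 1_{\{|z|\le R\}}+\psi\mathbf 1_{\{R<|z|<1\}}$, which gives
\[
T_\psi=T_{\psi\mathbf 1_{\{|z|\le R\}}}+T_{\psi\mathbf 1_{\{R<|z|<1\}}}.
\]
The bound $\norm{T_\varphi}\le\norm\varphi_\infty$ recalled in the introduction controls the second piece:
\[
\norm{T_{\psi\mathbf 1_{\{R<|z|<1\}}}}\le\operatorname*{ess\,sup}_{R<|z|<1}|\psi(z)|.
\]
By hypothesis this tends to $0$ as $R\uparrow1$. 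The compact operators form a norm-closed subset of the bounded operators, so it suffices to show that $T_{\psi_R}$ is compact, where $\psi_R=\psi\mathbf 1_{\{|z|\le R\}}$.

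To prove that $T_{\psi_R}$ is compact, we show that it maps every bounded weakly null sequence to a norm-null sequence. Let $f_n\rightharpoonup0$ in $\A$ with $\norm{f_n}_2\le1$. Then
\[
\norm{T_{\psi_R}f_n}_2=\norm{P(\psi_Rf_n)}_2\le\norm{\psi_Rf_n}_2\le\norm\psi_\infty\Bigl(\int_{|z|\le R}|f_n|^2\dA\Bigr)^{1/2}.
\]
It therefore remains to show that $f_n\to0$ uniformly on $\{|z|\le R\}$.

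For pointwise convergence, use the reproducing kernel $K_b$ from the proof of Lemma~\ref{lem:weak}: $f_n(b)=\langle f_n,K_b\rangle\to0$ for each $b\in\D$. For uniformity, note that the kernels satisfy
\[
\norm{K_b-K_{b'}}_2\le C_R|b-b'|\qquad\text{for } |b|,|b'|\le R.
\]
Alternatively, apply the estimate \eqref{eq:derivative} of Lemma~\ref{lem:oscillation} on disks centered near $\{|z|\le R\}$; it bounds $|f_n'|$ there by a constant depending only on $R$. Either way the family $\{f_n\}$ is equicontinuous on $\{|z|\le R\}$. Pointwise convergence to $0$ together with equicontinuity on a compact set gives uniform convergence, by an Arzel\`a--Ascoli type argument.

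Hence $\norm{T_{\psi_R}f_n}_2\to0$, so $T_{\psi_R}$ is compact, and therefore $T_\psi$ is compact as a norm limit of compact operators. The only mildly delicate step is the uniform convergence on compact sets. It could also be obtained by a Montel normal-families argument: every subsequence of $(f_n)$ has a further subsequence converging locally uniformly, and its limit must be $0$ by the pointwise limit.
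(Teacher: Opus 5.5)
Your proof is correct. Its outer structure is the same as the paper's: truncate to $\psi_R=\psi\mathbf 1_{\{|z|\le R\}}$, bound the remainder by $\norm{T_\varphi}\le\norm\varphi_\infty$, and use that the compact operators are norm-closed. The two arguments differ only in how they show that $T_{\psi_R}$ is compact.

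The paper views $T_{\psi_R}$ as the restriction to $\A$ of the integral operator on $L^2(\D,\dA)$ with kernel $\psi_R(w)(1-z\overline w)^{-2}$. The squared Hilbert--Schmidt norm of that operator is $\int_{|w|\le R}|\psi(w)|^2(1-|w|^2)^{-2}\dA(w)<\infty$. This is a one-line computation, and it gives the stronger conclusion that $T_{\psi_R}$ is Hilbert--Schmidt, with an explicit bound.

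You instead use complete continuity. You show that a weakly null sequence in $\A$ tends to $0$ uniformly on $\{|z|\le R\}$, so that $\norm{\psi_R f_n}_2\to0$. Each of your equicontinuity routes works:
\begin{itemize}
\item The kernel route works because $|\partial_{\overline b}K_b(w)|=|2w(1-\overline b w)^{-3}|\le 2(1-R)^{-3}$ on the convex set $\{|b|\le R\}$. Since $|\D|=1$, this gives $\norm{K_b-K_{b'}}_2\le 2(1-R)^{-3}|b-b'|$.
\item The derivative estimate from Lemma~\ref{lem:oscillation} gives $|f'(w)|\le C\norm f_2/(1-R^2)^2$ for $|w|\le R$. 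That lemma precedes this one, so there is no circularity.
\item The Montel argument is also fine.
\end{itemize}
One step is left implicit: on a Hilbert space, an operator that sends weakly null sequences to norm-null ones is compact. This follows from weak sequential compactness of the unit ball and is standard.

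What each route buys: yours is function-theoretic and carries over essentially verbatim to reflexive settings such as $A^p$, $1<p<\infty$, where Hilbert--Schmidt arguments are unavailable. The paper's is shorter and yields Schatten-class information about the truncated operators.
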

\begin{proof}
For $R<1$, set $\psi_R=\psi\mathbf1_{\{|w|\le R\}}$.
Consider the integral operator on $L^2(\D)$ with kernel
\[
L_R(z,w)=\frac{\psi_R(w)}{(1-z\overline w)^2}.
\]
By the formula for the norm of the reproducing kernel,
\begin{align*}
\int_\D\int_\D |L_R(z,w)|^2\dA(z)\dA(w)
&=\int_{|w|\le R}\frac{|\psi(w)|^2}{(1-|w|^2)^2}\dA(w)<\infty.
\end{align*}
A square-integrable kernel defines a Hilbert--Schmidt operator and hence a compact operator. The restriction of this integral operator to $\A$ is precisely $T_{\psi_R}$,
and its range lies in the closed subspace $\A$. Hence $T_{\psi_R}:\A\to\A$ is compact.
Moreover,
\[
\norm{T_\psi-T_{\psi_R}}
\le\norm{\psi\mathbf1_{\{|z|>R\}}}_\infty\longrightarrow0.
\]
Since the compact operators are closed in the operator norm~\cite{Conway1990}, $T_\psi$ is compact.
\end{proof}

\begin{proof}[Proof of \textnormal{(ii)}$\Rightarrow$\textnormal{(iii)}$\Rightarrow$\textnormal{(i)} in Theorem 1.1]
\textbf{First, we prove \textnormal{(ii)}$\Rightarrow$\textnormal{(iii)}.}
If the boundary vanishing condition holds for every fixed radius, take $r_j=1/j$ to obtain condition \textnormal{(iii)}.

\textbf{Next, we prove \textnormal{(iii)}$\Rightarrow$\textnormal{(i)}.}
Suppose that $r_j\downarrow0$ satisfies condition \textnormal{(iii)}, and set
\[
\rho_j=\tanh r_j\downarrow0,\qquad
\psi_j=M_{\rho_j}\varphi.
\]
Since $D(a,r_j)=E_{\rho_j}(a)$, condition \textnormal{(iii)} implies that, for every fixed $j$,
\[
\psi_j(a)\longrightarrow0\quad(|a|\to1),
\qquad \norm{\psi_j}_\infty\le\norm\varphi_\infty.
\]
By Lemma~\ref{lem:vanishing}, each $T_{\psi_j}$ is compact.
For sufficiently large $j$, we have $\rho_j\le1/32$. Hence Lemma~\ref{lem:approx} gives
\[
\norm{T_\varphi-T_{\psi_j}}\le C\rho_j\norm\varphi_\infty\longrightarrow0.
\]
The compact operators are closed in the operator norm, so $T_\varphi$ is compact, proving condition \textnormal{(i)}.
\end{proof}
To prove \textnormal{(v)}$\Rightarrow$\textnormal{(ii)} in Theorem 1.1, we need some further preparation.
Write $\mathbb H=\mathbb R\times(0,\infty)$, equipped in this subsection with ordinary area measure $\dd x\dd t$.
\begin{lemma}\label{lem:halfplane-unique}
Fix $0<q<1$. If $F\in L^\infty(\mathbb H)$ satisfies
\begin{equation}\label{eq:halfplane-zero}
\int_{B((x,t),qt)}F(u,v)\dd u\dd v=0
\qquad(x\in\mathbb R,\ t>0),
\end{equation}
then $F=0$ almost everywhere. Here
\[
B((x,t),qt)
=
\left\{(u,v)\in\mathbb{R}^2:
(u-x)^2+(v-t)^2<(qt)^2
\right\}
\]
denotes the Euclidean disk centered at $(x,t)$ with radius $qt$.
Since $0<q<1$, this disk is contained in the upper half-plane
$\mathbb H=\mathbb R\times(0,\infty)$. Consequently,
\begin{equation}\label{eq:halfplane-dense}
\overline{\Span\{\mathbf1_{B((x,t),qt)}:x\in\mathbb R,\ t>0\}}^{\,L^1(\mathbb H)}
=L^1(\mathbb H).
\end{equation}
\end{lemma}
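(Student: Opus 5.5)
The plan is to prove the uniqueness statement \eqref{eq:halfplane-zero}$\Rightarrow F=0$ first. The density statement \eqref{eq:halfplane-dense} then follows formally by Hahn--Banach and duality. If the closed span were a proper closed subspace of $L^1(\mathbb H)$, there would be a nonzero $F\in L^\infty(\mathbb H)=L^1(\mathbb H)^*$ annihilating every $\mathbf1_{B((x,t),qt)}$, i.e.\ satisfying \eqref{eq:halfplane-zero}. That contradicts uniqueness.

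For uniqueness, I would exploit the affine invariance of the family of disks. The map $(u,v)\mapsto(x+tu,tv)$ sends $B((0,1),q)$ onto $B((x,t),qt)$. So, with $w(b)=\sqrt{q^2-(b-1)^2}$ on $(1-q,1+q)$, the hypothesis reads
\[
t^2\int_{1-q}^{1+q}\int_{|a|<w(b)}F(x+ta,tb)\dd a\dd b=0\qquad(x\in\mathbb R,\ t>0).
\]
First I regularize in the horizontal variable. For $\eta\in C_c^\infty(\mathbb R)$, the function $F_\eta(u,v)=\int F(u-s,v)\eta(s)\dd s$ still satisfies the hypothesis, because the family is invariant under horizontal translations. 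Moreover, $F=0$ a.e.\ as soon as $F_\eta=0$ for an approximate identity $\eta$. Then I take the Fourier transform in $x$, as a tempered distribution in $\xi$ depending measurably and boundedly on $v$. Writing $g(\xi,v)=\widehat{F_\eta}(\xi,v)$, the inner integral over $a$ produces the kernel $2\sin(\xi t\,w(b))/\xi$. The condition becomes
\[
\int_{1-q}^{1+q} g(\xi,tb)\,\frac{\sin(\xi t\,w(b))}{\xi}\dd b=0\qquad\text{for all }t>0,
\]
to be read in the sense of distributions in $\xi$.

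Next I would use the dilation structure. Set $\lambda=\xi t$, so that $tb=\lambda b/\xi$. Passing to logarithmic coordinates $\log|\xi|$ and $\log v$ turns the condition into a convolution equation on the multiplicative group. The convolving kernel is $b\mapsto\sin(\lambda w(b))$, supported in $[1-q,1+q]$, and I would apply a Mellin transform in $v$ for each fixed $\lambda$. Alternatively, one can view it as a Volterra-type equation with sliding support $[(1-q)t,(1+q)t]$. In that form one argues via Titchmarsh's convolution theorem: the infimum of the support of $v\mapsto g(\xi,v)$ must be $0$ or $-\infty$ in $\log v$. For the $\xi=0$ mode the kernel is $2w(b)>0$, so the Titchmarsh argument applies cleanly. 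Varying $\lambda$ over an interval then gives enough nonvanishing Mellin symbols $\int\sin(\lambda w(b))\,b^{s}\dd b$. These symbols are entire in $s$ and not identically zero, so $g=0$ on open sets of $\xi$ and then everywhere.

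The main obstacle is this last step. Because $F$ is merely bounded, $\widehat{F_\eta}(\cdot,v)$ is only a distribution, and the Mellin transform in $v$ of a bounded function lives only on the imaginary axis as a distribution. Zeros of the kernel's Mellin transform therefore have to be handled carefully, in the spirit of Wiener's Tauberian theorem, where one needs the kernel transform to have no common zeros. My first attempt would be to show that, as $\lambda$ varies, the entire functions $s\mapsto\int_{1-q}^{1+q}\sin(\lambda w(b))\,b^{s-1}\dd b$ have no common zero on $\operatorname{Re}s=0$. For that I would use that at small $\lambda$ it is $\lambda\int w(b)b^{s-1}\dd b+O(\lambda^3)$, and check the $\lambda$-derivatives at $0$. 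Combined with a Wiener-type argument on the group $\mathbb R\rtimes(0,\infty)$ (Fourier in $\xi$, Mellin in $v$), this should force $g\equiv0$.
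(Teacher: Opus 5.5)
There is a genuine gap: the convolution structure your uniqueness argument relies on is not present. Your Hahn--Banach step is the same as the paper's, and the horizontal regularization is harmless.

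After the Fourier transform in $x$, the hypothesis reads $\int_{1-q}^{1+q} g(\xi,tb)\,\sin(\xi t\,w(b))\,db=0$ for all $t>0$. For a fixed frequency $\xi\neq0$ the kernel depends on $t$ through $\xi t$, and not only through $v/t=b$. So this is not a Mellin convolution in $v$. If you instead freeze $\lambda=\xi t$, the only free variable left is $\xi$. For each $\xi$ you get a single scalar equation on the values of $g(\xi,\cdot)$ on $[\lambda(1-q)/\xi,\lambda(1+q)/\xi]$. Nothing is left to Mellin-transform, and the functions $s\mapsto\int_{1-q}^{1+q}\sin(\lambda w(b))\,b^{s-1}\,db$ are not symbols of any operator in the problem.

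This reflects the non-commutativity of the $ax+b$ group $\mathbb R\rtimes(0,\infty)$, which acts simply transitively on $\mathbb H$ and permutes the disks. Fourier in $x$ and Mellin in $v$ do not simultaneously diagonalize this action, and its infinite-dimensional irreducible representations are not captured by scalar symbols. There is also no standard Wiener Tauberian theorem for translates of one function in (weighted) $L^1$ of this non-unimodular group that a ``no common zeros'' check could feed into. So the step you flag as the main obstacle does not merely need care; as set up it has no content.

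The $\xi=0$ part is also not right. Titchmarsh's theorem needs one-sided support, which a bounded function of $\log v$ on all of $\mathbb R$ need not have. Positivity of $w$ only excludes a zero of $\int_{1-q}^{1+q}w(b)\,b^{i\gamma}\,db$ at $\gamma=0$. Yet nonvanishing for every real $\gamma$ is a necessary input. Indeed, $F(u,v)=v^{i\gamma}$ is bounded and $\int_{B((x,t),qt)}F=t^{2+i\gamma}\int_{B((0,1),q)}v^{i\gamma}\,du\,dv$, so the lemma would fail if this Mellin transform had a real zero. It has none: $v^{2+i\gamma}$ is a hyperbolic Laplace eigenfunction with eigenvalue $(2+i\gamma)(1+i\gamma)\notin(-\infty,0]$. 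A vanishing integral over the hyperbolic disk $B((0,1),q)$ would make the corresponding spherical function a Neumann eigenfunction there, which is impossible. Your plan needs an argument of this kind and contains none.

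The paper avoids group harmonic analysis altogether. It band-limits in $x$, so that $\partial_x^2$ acts as a bounded operator $B$ on $X=L^2(\mathbb R,(1+x^2)^{-4}dx)$, and smooths in scale. It then observes that the disk averages $U(x,t,r)$ satisfy $U_{rr}+\frac3rU_r=U_{xx}+U_{tt}$, and reads this as the wave equation $U_{tt}=U_{rr}+\frac3rU_r-BU$ with $t$ as time. An $r^3$-weighted energy estimate is run on $0\le r\le qt$. There the hypothesis is the Dirichlet condition $U(t,qt)=0$ on a boundary moving with speed $q<1$. This makes the boundary flux $-\frac{q(1-q^2)}2(qt)^3\norm{U_r(t,qt)}_X^2\le0$. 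Gronwall, together with $E(t)\to0$ as $t\downarrow0$, gives $U\equiv0$. That energy identity is what supplies the spectral nonvanishing your Fourier--Mellin route would still have to prove.
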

\begin{proof}
We write $\mathcal S(\mathbb R)$ for the Schwartz space of
smooth functions $\vartheta$ such that
\[
\sup_{x\in\mathbb R}
(1+|x|)^m |\vartheta^{(n)}(x)|<\infty
\qquad (m,n\in\mathbb N_0),
\]
and $\mathcal S'(\mathbb R)$ for its continuous dual,
the space of tempered distributions. Choose a Schwartz function $\psi$ such that $\int\psi=1$ and $\widehat\psi\in C_c^\infty(\mathbb R)$.
Let $\psi_\varepsilon(x)=\varepsilon^{-1}\psi(x/\varepsilon)$, and define
\[
F_\varepsilon(x,t)=\int_{\mathbb R}\psi_\varepsilon(b)F(x-b,t)\dd b.
\]
Next, choose a nonnegative function $\eta_\delta\in C_c^\infty(-\delta,\delta)$ with $\int\eta_\delta=1$, and set
\begin{equation}\label{eq:single-regularize}
G(x,t)=\int_{\mathbb R}\eta_\delta(s)F_\varepsilon(e^sx,e^st)\dd s.
\end{equation}
For the limiting argument at the end of the proof, these functions are chosen from a single scaling family, namely
\[
\eta_\delta(s)=\delta^{-1}\eta(s/\delta),
\]
where $\eta\in C_c^\infty(-1,1)$ is nonnegative and
$\int_{\mathbb R}\eta(s)\dd s=1$. Thus $\{\eta_\delta\}_{\delta>0}$ is an
approximate identity on $\mathbb R$.
All integrals are justified by boundedness and the integrability of the kernels, and Fubini's theorem applies.
Horizontal translations preserve the family of disks, and dilations satisfy
$e^s B((x,t),qt)=B((e^sx,e^st),qe^st)$.
Hence both $F_\varepsilon$ and $G$ satisfy \eqref{eq:halfplane-zero}.
The dilation change of variables introduces only the nonzero factor $e^{-2s}$, which does not affect the vanishing of the integrals.

For completeness, the translation step follows from Fubini's theorem:
\begin{align*}
\int_{B((x,t),qt)}F_\varepsilon(u,v)\dd u\dd v
&=\int_{\mathbb R}\psi_\varepsilon(b)
  \int_{B((x,t),qt)}F(u-b,v)\dd u\dd v\dd b\\
&=\int_{\mathbb R}\psi_\varepsilon(b)
  \int_{B((x-b,t),qt)}F(y,v)\dd y\dd v\dd b=0.
\end{align*}
Likewise, using $(y,v)=(e^su,e^sv)$, we have
\begin{align*}
\int_{B((x,t),qt)}G(u,v)\dd u\dd v
&=\int_{\mathbb R}\eta_\delta(s)e^{-2s}
  \int_{B((e^sx,e^st),qe^st)}F_\varepsilon(y,v)\dd y\dd v\dd s=0.
\end{align*}
The absolute convergence needed for these applications of Fubini follows from
$F\in L^\infty(\mathbb H)$, $\psi_\varepsilon\in L^1(\mathbb R)$, and the
compact support of $\eta_\delta$.

Fix $\varepsilon,\delta>0$ from now on. Constants in the
regularity estimates below may depend on these parameters,
the chosen kernels, and $\norm F_\infty$, but not on $x$ or $t$.
Write
\[
H_j=\partial_x^jF_\varepsilon
     =(\partial_x^j\psi_\varepsilon)*_xF,
\qquad j\ge0.
\]
These horizontal derivatives are well defined distributionally,
with bounded measurable representatives satisfying
\[
\norm{H_j}_\infty
\le \norm F_\infty
   \norm{\partial_x^j\psi_\varepsilon}_{L^1(\mathbb R)}.
\]

We first verify that $G$ has a smooth representative.
Changing variables $v=e^st$ in
\eqref{eq:single-regularize} gives
\begin{equation}\label{eq:single-smooth-kernel}
G(x,t)
=
\int_0^\infty\int_{\mathbb R}
F(y,v)\,
\eta_\delta\!\left(\log\frac vt\right)
\psi_\varepsilon\!\left(\frac vt x-y\right)
\dd y\,\frac{\dd v}{v}.
\end{equation}
On each compact subset of $\mathbb H$, the $v$-integration
is restricted to a fixed compact subinterval of $(0,\infty)$.
Moreover, every derivative in $(x,t)$ of the kernel in
\eqref{eq:single-smooth-kernel} is dominated there by an
integrable function of $(y,v)$: the coefficients are bounded
on that compact set, and the horizontal kernels and all their
derivatives are Schwartz functions with uniformly bounded
translations. Since $F$ is bounded, differentiation under the
integral is valid to every order. Thus $G\in C^\infty(\mathbb H)$.

For the quantitative estimates, let
\[
D=x\partial_x+t\partial_t.
\]
Horizontal differentiation and integration by parts in the
scale variable, initially in the distributional sense, give
\begin{align*}
G_x
&=\int_{\mathbb R}\eta_\delta(s)e^s
       H_1(e^sx,e^st)\dd s,\\
G_{xx}
&=\int_{\mathbb R}\eta_\delta(s)e^{2s}
       H_2(e^sx,e^st)\dd s,\\
DG
&=-\int_{\mathbb R}\eta_\delta'(s)
       H_0(e^sx,e^st)\dd s,\\
D^2G
&=\int_{\mathbb R}\eta_\delta''(s)
       H_0(e^sx,e^st)\dd s,\\
D(G_x)
&=-\int_{\mathbb R}
       \bigl(\eta_\delta'(s)+\eta_\delta(s)\bigr)e^s
       H_1(e^sx,e^st)\dd s.
\end{align*}
Consequently,
\[
|G|+|G_x|+|G_{xx}|+|DG|+|D^2G|+|D(G_x)|
\le C.
\]
The identities
\[
tG_t=DG-xG_x,\qquad
tG_{xt}=D(G_x)-xG_{xx}
\]
and
\[
t^2G_{tt}
=D^2G-DG-2xD(G_x)+x^2G_{xx}
\]
therefore imply
\begin{equation}\label{eq:single-derivatives}
|G(x,t)|+|G_x(x,t)|\le C,
\qquad
|G_t(x,t)|\le \frac{C(1+|x|)}{t},
\end{equation}
as well as
\begin{equation}\label{eq:single-second-derivatives}
|G_{xx}(x,t)|\le C,\qquad
|G_{xt}(x,t)|\le \frac{C(1+|x|)}{t},\qquad
|G_{tt}(x,t)|\le \frac{C(1+x^2)}{t^2}.
\end{equation}
In particular, on every strip $0<a\le t\le b<\infty$,
all derivatives of total order at most two are bounded by
$C_{a,b}(1+x^2)$.

Finally, choose $L<\infty$ such that
$\operatorname{supp}\widehat{\psi_\varepsilon}\subset[-L,L]$.
Horizontal convolution gives this Fourier support bound for
$F_\varepsilon(\cdot,t)$ at almost every height.
Since $s\in\operatorname{supp}\eta_\delta\subset[-\delta,\delta]$,
horizontal dilation enlarges the support by at most $e^\delta$.
It follows that
\[
\operatorname{supp}\mathcal F_xG(\cdot,t)
\subset[-\Lambda,\Lambda],
\qquad \Lambda=e^\delta L,
\]
for every $t>0$. Here Fourier transforms are understood in
the sense of tempered distributions. Indeed, the uniform bound on $G$ and its pointwise continuity
imply that, for every $\vartheta\in\mathcal S(\mathbb R)$,
\[
t\longmapsto
\int_{\mathbb R}G(x,t)\vartheta(x)\dd x
\]
is continuous by dominated convergence. Hence
$G(\cdot,t)$ is continuous in the weak topology of
$\mathcal S'(\mathbb R)$. Since the Fourier transform is continuous
on $\mathcal S'(\mathbb R)$, the common Fourier support bound
extends from almost every height to every height by testing
$\mathcal F_xG(\cdot,t)$ against functions in
$C_c^\infty(\mathbb R\setminus[-\Lambda,\Lambda])$.

For fixed $\varepsilon$, let $\delta\downarrow0$. Continuity of dilations in local $L^1$ gives
$G\to F_\varepsilon$ in $L^1_{\mathrm{loc}}$. Then, as $\varepsilon\downarrow0$,
$F_\varepsilon\to F$ in $L^1_{\mathrm{loc}}$.
More explicitly, for every compact set $K\Subset\mathbb H$, the choice of
$\eta_\delta$ above and the continuity of dilations in $L^1(K)$ imply
\[
G\longrightarrow F_\varepsilon\quad\text{in }L^1(K)\qquad(\delta\downarrow0).
\]
The standard approximation-of-the-identity property of horizontal convolution
similarly gives
\[
F_\varepsilon\longrightarrow F\quad\text{in }L^1(K)\qquad(\varepsilon\downarrow0).
\]
Consequently, once the vanishing of $G$ has been established for every fixed
$\varepsilon,\delta>0$, these two local $L^1$ convergences imply $F=0$ almost everywhere.
Thus it suffices to prove that each $G$ vanishes.

\medskip
Consider the Hilbert space
\[
X=L^2(\mathbb R,w(x)\dd x),\qquad w(x)=(1+x^2)^{-4}.
\]
Then
\[
|\langle v,\vartheta\rangle|
\le\|v\|_X\left(\int|\vartheta(x)|^2(1+x^2)^4dx\right)^{1/2}, \quad v \in X,
\quad\vartheta\in\mathcal S,
\]
Bounded functions and functions of polynomial growth of degree at most two belong to $X$.
For $v\in X$ and $h\in\mathbb R$, define the horizontal
translation $\tau_hv$ by
\[
(\tau_hv)(x)=v(x-h).
\]
Since
\[
1+u^2\le 2\bigl(1+(u+h)^2\bigr)(1+h^2),
\]
we have
\[
w(u+h)\le 16(1+h^2)^4w(u).
\]
Consequently,
\[
\begin{aligned}
\|\tau_hv\|_X^2
&=\int_{\mathbb R}|v(u)|^2w(u+h)\,du\\
&\le 16(1+h^2)^4\|v\|_X^2,
\end{aligned}
\]
and hence
\[
\|\tau_hv\|_X
\le 4(1+h^2)^2\|v\|_X
\le 4(1+|h|)^4\|v\|_X.
\]
Choose $\chi\in C_c^\infty(\mathbb R)$ that is identically $1$ on a neighborhood of $[-\Lambda,\Lambda]$.
With the convention $\widehat v(\xi)=\int v(x)e^{-ix\xi}\dd x$, set
\[
K=\mathcal F^{-1}(-\xi^2\chi(\xi)),\qquad Bv=K*v.
\]
The function $K$ is Schwartz. Minkowski's inequality and the translation estimate above give
\[
\norm{Bv}_X\le
C\left(\int|K(h)|(1+|h|)^4\dd h\right)\norm v_X.
\]
Thus $B$ is bounded on $X$. If the horizontal Fourier support of $v$ is contained in
$[-\Lambda,\Lambda]$, then $Bv=\partial_x^2v$, initially in the distributional sense,
and also in $X$ for the smooth functions used below. 

\medskip
For $0\le r<t$, define
\[
U(x,t,r)=\frac1\pi\int_{a^2+b^2<1}G(x+ra,t+rb)\dd a\dd b.
\]
For each $(t,r)$, we regard $U(\cdot,t,r)$ as an element of $X$ and,
when no confusion can arise, write it simply as $U(t,r)$. All derivatives of
$U$ below are understood in this $X$-valued sense.
For $r>0$, this is the ordinary disk average of radius $r$,
and $U(x,t,0)=G(x,t)$.
For the regularity argument, use the same formula to extend
$U$ to the open parameter set
\[
\mathcal O=\{(t,r)\in\mathbb R^2:t>0,\ |r|<t\}.
\]
We claim that
\[
(t,r)\longmapsto U(\cdot,t,r)
\]
belongs to $C^2(\mathcal O;X)$, where derivatives and continuity
are taken in the norm of $X$.

Let $\mathcal K\Subset\mathcal O$. There exist constants
$a_{\mathcal K},b_{\mathcal K},L_{\mathcal K}>0$ such that,
for $(t,r)\in\mathcal K$ and $a^2+b^2\le1$,
\[
a_{\mathcal K}\le t+rb\le b_{\mathcal K},
\qquad |ra|\le L_{\mathcal K}.
\]
Thus \eqref{eq:single-derivatives} and
\eqref{eq:single-second-derivatives} show that all parameter
derivatives of the integrand of total order at most two
are bounded in absolute value by $C_{\mathcal K}(1+x^2)$.
Indeed, if
\[
\mathcal A H(x,t,r)
=\frac1\pi\int_{a^2+b^2<1}
H(x+ra,t+rb)\dd a\dd b,
\]
then the pointwise derivative formulas are
\begin{align*}
U_t&=\mathcal A(G_t),\\
U_r&=\frac1\pi\int_{a^2+b^2<1}
       (aG_x+bG_t)(x+ra,t+rb)\dd a\dd b,\\
U_{tt}&=\mathcal A(G_{tt}),\\
U_{tr}&=\frac1\pi\int_{a^2+b^2<1}
       (aG_{xt}+bG_{tt})(x+ra,t+rb)\dd a\dd b,\\
U_{rr}&=\frac1\pi\int_{a^2+b^2<1}
       (a^2G_{xx}+2abG_{xt}+b^2G_{tt})
       (x+ra,t+rb)\dd a\dd b.
\end{align*}

The dominating function belongs to $X$, since
\[
\int_{\mathbb R}(1+x^2)^2w(x)\dd x
=
\int_{\mathbb R}(1+x^2)^{-2}\dd x<\infty.
\]
The smoothness of $G$ and dominated convergence therefore
give continuity in $X$ of all the displayed expressions.
Applying the same domination to parameter difference
quotients, using the fundamental theorem of calculus on
a slightly larger compact subset of $\mathcal O$, proves
that these expressions are the strong first and second
derivatives of $U$. Hence $U\in C^2(\mathcal O;X)$.

The horizontal Fourier support of $U(\cdot,t,r)$ remains
contained in $[-\Lambda,\Lambda]$: horizontal translations
do not enlarge Fourier support, and the averaging only
combines functions having this common support.
Moreover,
\[
U_{xx}(x,t,r)
=\frac1\pi\int_{a^2+b^2<1}
G_{xx}(x+ra,t+rb)\dd a\dd b
\]
belongs to $X$. Consequently,
\[
BU(\cdot,t,r)=U_{xx}(\cdot,t,r)
\]
as an equality in $X$. Applying \eqref{eq:halfplane-zero} to $G$ at the center $(x,t)$ and radius
$qt$ gives, for every $t>0$,
\begin{equation}\label{eq:single-movingboundary}
U(t,qt)=0.
\end{equation}
Disk averages satisfy
\begin{equation}\label{eq:single-darboux}
U_{rr}+\frac3rU_r=U_{xx}+U_{tt}.
\end{equation}
Let $S(x,t,r)$ be the circular average of $G$. The divergence theorem gives
$U_{xx}+U_{tt}=2S_r/r$.
Since $U=2r^{-2}\int_0^r sS(x,t,s)\dd s$, we obtain
$S=U+(r/2)U_r$, which yields \eqref{eq:single-darboux} upon substitution.
Consequently, in $X$,
\begin{equation}\label{eq:single-wave}
U_{tt}=U_{rr}+\frac3rU_r-BU.
\end{equation}
The identity $U_r(t,0)=0$ follows from the vanishing of the first moments of the unit disk.

\medskip
The weight $r^3$ (i.e.\ $r^{n+1}$ with $n=2$) is chosen so that the radial part $U_{rr}+\tfrac3rU_r$ is  symmetric on $L^2(r^3\,\dd r)$,
which eliminates the boundary terms in the integration by parts below.
Define the nonnegative energy
\begin{equation}\label{eq:single-energy}
E(t)=\frac12\int_0^{qt}
\bigl(\norm{U_t(t,r)}_X^2+\norm{U_r(t,r)}_X^2+\norm{U(t,r)}_X^2\bigr)r^3\dd r.
\end{equation}
On every compact interval $0<a\le t\le b$, the preceding regularity justifies differentiation of the energy and integration by parts.
To make the behavior at the lower endpoint explicit, for $0<\alpha<qt$ first
set
\[
E_\alpha(t)=\frac12\int_\alpha^{qt}
\bigl(\norm{U_t(t,r)}_X^2+\norm{U_r(t,r)}_X^2+\norm{U(t,r)}_X^2\bigr)r^3\dd r.
\]
Differentiate $E_\alpha$ and integrate by parts in $r$. The terms at
$r=\alpha$ are bounded by a constant times $\alpha^3$ on $[a,b]$ and hence
vanish as $\alpha\downarrow0$.
The same $C^2(\mathcal O;X)$ regularity implies that $U_t$ and
$U_r$ remain bounded in $X$ as $r\downarrow0$, uniformly for
$t\in[a,b]$. Hence
\[
r^3\operatorname{Re}\langle U_t(t,r),U_r(t,r)\rangle_X
\longrightarrow0
\qquad(r\downarrow0),
\]
uniformly for $t\in[a,b]$. Let $\alpha\downarrow0$.
By \eqref{eq:single-wave},
\begin{align*}
E'(t)={}&(qt)^3\operatorname{Re}\langle U_t(t,qt),U_r(t,qt)\rangle_X\\
&+\frac q2(qt)^3\bigl(\norm{U_t(t,qt)}_X^2+
\norm{U_r(t,qt)}_X^2+\norm{U(t,qt)}_X^2\bigr)\\
&+\int_0^{qt}\operatorname{Re}\langle U_t,(I-B)U\rangle_Xr^3\dd r.
\end{align*}
Differentiating along the boundary in \eqref{eq:single-movingboundary} gives
$U_t(t,qt)+qU_r(t,qt)=0$.
Hence the first two lines equal
\[
-\frac{q(1-q^2)}2(qt)^3\norm{U_r(t,qt)}_X^2\le0.
\]
For the last term, the boundedness of $B$ on $X$ and Cauchy--Schwarz give
\begin{align*}
\left|\operatorname{Re}\langle U_t,(I-B)U\rangle_X\right|
&\le (1+\norm B)\norm{U_t}_X\norm U_X\\
&\le \frac{1+\norm B}{2}
   \bigl(\norm{U_t}_X^2+\norm U_X^2\bigr),
\end{align*}
where we used $2ab\leq a^2+b^2$. Since the integral of
$\norm{U_t}_X^2+\norm U_X^2$ against $r^3\dd r$ is bounded by
$2E(t)$, we may set
\[
C_0:=1+\norm{B}_{\mathcal B(X)}.
\]
Thus
\begin{equation}\label{eq:single-gronwall}
E'(t)\leq C_0 E(t),\qquad t>0.
\end{equation}
For fixed regularization parameters, $C_0$ is independent of $t$ and of the
endpoints of the interval on which the energy identity is justified.
\medskip

For $0<t\le1$ and $0\le r\le qt$, the sampling points satisfy
$t+rb\ge(1-q)t$ and $|x+ra|\le|x|+1$. By
\eqref{eq:single-derivatives},
\[
\norm{U(t,r)}_X\le C,
\qquad
\norm{U_t(t,r)}_X+\norm{U_r(t,r)}_X\le C/t.
\]
Here $U_r$ is the average of $aG_x+bG_t$, and the functions of at most
linear growth in $x$ that occur here belong to $X$. Therefore
\[
0\le E(t)\le C\int_0^{qt}(1+t^{-2})r^3\dd r
\le C(t^4+t^2)\longrightarrow0
\qquad(t\downarrow0).
\]
For fixed $T>0$ and every $0<s<T$, \eqref{eq:single-gronwall} gives
\[
E(T)\le e^{C_0(T-s)}E(s).
\]
Letting $s\downarrow0$ and using $E(s)\to0$ yields $E(T)=0$. Since the energy contains the term $\norm U_X^2$, continuity gives $U(t,r)=0$ for $0<r<qt$.
More precisely, $U(\cdot,T,r)$ is continuous as an $X$-valued function of
$r$, and $E(T)=0$ implies $U(\cdot,T,r)=0$ for every $0\le r\le qT$.

Letting $r\downarrow0$, we obtain $G(\cdot,t)=0$ in $X$.
Since $G$ is smooth, $G=0$ on $\mathbb H$.
Letting first $\delta\downarrow0$ and then $\varepsilon\downarrow0$, we obtain $F=0$ almost everywhere.
\medskip

If \eqref{eq:halfplane-dense} were false, the Hahn--Banach theorem would
produce a nonzero continuous linear functional on $L^1(\mathbb H)$ that
annihilates every disk indicator. Since
$L^1(\mathbb H)^*=L^\infty(\mathbb H)$, this functional is represented by a
nonzero $F\in L^\infty(\mathbb H)$. Hence
\[
\int_{B((x,t),qt)}F(u,v)\dd u\dd v=0
\qquad(x\in\mathbb R,\ t>0),
\]
which contradicts the uniqueness just proved.
\end{proof}
\begin{lemma}\label{lem:single-blowup}
Let $\varphi\in L^\infty(\D)$, and suppose that a fixed $\rho_0\in(0,1)$ satisfies
$M_{\rho_0}\varphi(a)\to0$ as $|a|\to1$.
For arbitrary $h_n\downarrow0$ and $|\zeta_n|=1$, define
\[
F_n(x,t)=
\begin{cases}
\varphi\bigl(\zeta_n(1-h_nt+ih_nx)\bigr),
 & |1-h_nt+ih_nx|<1,\\
0,&\text{otherwise},
\end{cases}\qquad (x,t)\in\mathbb H.
\]
Then every weak-star convergent subsequence of $F_n$ has zero limit.
\end{lemma}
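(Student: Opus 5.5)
Let $F$ be a weak-star limit of a subsequence (still written $F_n$) in $L^\infty(\mathbb H)=L^1(\mathbb H)^*$; note $\norm{F_n}_\infty\le\norm\varphi_\infty$. The plan is to show that $F$ integrates to zero over every disk $B((x,t),qt)$ for a suitable fixed $q\in(0,1)$, and then to conclude $F=0$ almost everywhere from Lemma~\ref{lem:halfplane-unique}. Since $\mathbf1_{B((x,t),qt)}\in L^1(\mathbb H)$, weak-star convergence gives
\[
\int_{B((x,t),qt)}F=\lim_n\int_{B((x,t),qt)}F_n .
\]
So it suffices to show that, for each fixed $(x,t)$, the right-hand integrals tend to zero.

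The map $\Phi_n(u,v)=\zeta_n(1-h_nv+ih_nu)$ is an affine similarity with ratio $h_n$. Hence
\[
\int_{B((x,t),qt)}F_n=\frac{\pi}{h_n^2}\int_{\Phi_n(B((x,t),qt))}\varphi\,\dA,
\]
with the convention that $\varphi$ is set to zero outside $\D$. The image $\Phi_n(B)$ is the Euclidean disk with center $c_n=\zeta_n(1-h_nt+ih_nx)$ and radius $qth_n$. Since $|c_n|^2=(1-h_nt)^2+h_n^2x^2$, we have $1-|c_n|=h_nt+O(h_n^2)$, and so for large $n$ the disk lies in $\D$. The task is therefore to compare the average of $\varphi$ over the Euclidean disk $B(c_n,qth_n)$ with $M_{\rho_0}\varphi$ at a suitable center.

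By \eqref{eq:area} and the formulas for $c_{a,\rho}$ and $R_{a,\rho}$, the disk $E_{\rho_0}(a)$ is the Euclidean disk with center $c_{a,\rho_0}$ and radius $R_{a,\rho_0}$. As $|a|\to1$ these satisfy
\[
R_{a,\rho_0}=\frac{2\rho_0}{1-\rho_0^2}(1-|a|)(1+o(1)),\qquad
1-|c_{a,\rho_0}|=\frac{1+\rho_0^2}{1-\rho_0^2}(1-|a|)(1+o(1)).
\]
Hence the ratio of radius to boundary distance tends to $q_0:=2\rho_0/(1+\rho_0^2)\in(0,1)$. Take $q=q_0$. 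For each $n$, choose $a_n$ on the ray through $c_n$ so that $c_{a_n,\rho_0}=c_n$ exactly; this is possible because $a\mapsto|c_{a,\rho_0}|$ is a continuous increasing bijection of $[0,1)$. Then $R_{a_n,\rho_0}=qth_n(1+o(1))$, so $B(c_n,qth_n)$ and $E_{\rho_0}(a_n)$ are concentric disks whose radii have ratio tending to $1$. Their symmetric difference therefore has area $o(h_n^2)$. This gives
\[
\left|\frac{1}{h_n^2}\int_{B(c_n,qth_n)}\varphi\,\dA-\frac{\pi q^2t^2}{|E_{\rho_0}(a_n)|}\int_{E_{\rho_0}(a_n)}\varphi\,\dA\right|\le o(1)\,\norm\varphi_\infty,
\]
using $|E_{\rho_0}(a_n)|\sim q^2t^2h_n^2$ (with the normalized area measure, $|B(c,R)|=R^2$). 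Since $|a_n|\to1$, the hypothesis $M_{\rho_0}\varphi(a_n)\to0$ shows the second term tends to zero. Hence $\int_{B((x,t),qt)}F=0$ for all $(x,t)\in\mathbb H$, and Lemma~\ref{lem:halfplane-unique} gives $F=0$.

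The main obstacle is the asymptotic matching in the last step. One must confirm that the $O(h_n^2)$ corrections in $|c_n|$ from the $h_nx$ term, and the $o(1)$ corrections in the ratio, are uniform for fixed $(x,t)$. This is elementary but requires care with the limits $|a|\to1$ versus $h_n\to0$. A cleaner alternative is to choose $q$ exactly equal to the limiting ratio, as done above, so that only first-order asymptotics are needed.
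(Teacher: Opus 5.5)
Your proof is correct and follows essentially the paper's argument. You rescale, recognize $E_{\rho_0}$-disks in the blow-up coordinates as disks of fixed ratio $q=2\rho_0/(1+\rho_0^2)$, pass to the weak-star limit against disk indicators, and invoke Lemma~\ref{lem:halfplane-unique}. The paper fixes the pseudohyperbolic center $b_n$ and shows the preimage domains converge in $L^1$ to a fixed disk. You instead fix the target disk and choose $a_n$ so that $E_{\rho_0}(a_n)$ is concentric with its image; this is only a cosmetic difference.

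There is one small slip. In your displayed comparison the first term should be $\frac{\pi}{h_n^2}\int_{B(c_n,qth_n)}\varphi\dA$, matching your change-of-variables formula. As written, the two terms differ by a factor of $\pi$ even when $\varphi\equiv1$. This does not affect the conclusion.
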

\begin{proof}
Uniform boundedness and the separability of $L^1(\mathbb H)$ yield a weak-star convergent subsequence.
This is the metrizable bounded-ball case of the Banach--Alaoglu theorem.
Continue to denote the subsequence by $F_n$, and let its limit be $F$.
Every compact set $K\Subset\mathbb H$ eventually lies in the valid coordinate region, since
\[
|1-h_nt+ih_nx|<1\quad\Longleftrightarrow\quad
2t>h_n(t^2+x^2).
\]
Set
\[
c_0=\frac{1+\rho_0^2}{1-\rho_0^2},\qquad
 d_0=\frac{2\rho_0}{1-\rho_0^2}.
\]
Fix arbitrary $X\in\mathbb R$ and $Y>0$, and take
$b_n=\zeta_n(1-h_nY+ih_nX)$.
Then $b_n\in\D$ for all sufficiently large $n$, and $|b_n|\to1$.
Let $\Omega_n(X,Y)$ be the preimage of $E_{\rho_0}(b_n)$ under the affine coordinates
$z=\zeta_n(1-h_nt+ih_nx)$.

Set $w_n=1-h_nY+ih_nX$. After rotation, the Euclidean center and radius of the pseudohyperbolic disk are
\[
C_n=\frac{(1-\rho_0^2)w_n}{1-\rho_0^2|w_n|^2},\qquad
D_n=\frac{\rho_0(1-|w_n|^2)}{1-\rho_0^2|w_n|^2}.
\]
Since $|w_n|^2=1-2h_nY+h_n^2(X^2+Y^2)$, direct expansion gives
\[
C_n=1-h_nc_0Y+ih_nX+O(h_n^2),\qquad
D_n=h_nd_0Y+O(h_n^2).
\]
Hence the center and radius of $\Omega_n(X,Y)$ converge to $(X,c_0Y)$ and $d_0Y$, respectively.
Denote the limiting disk by $\Omega(X,Y)$; its minimum height is
$(c_0-d_0)Y=(1-\rho_0)Y/(1+\rho_0)>0$.
These disks eventually lie in a common compact set $K\Subset\mathbb H$, and their indicator functions converge pointwise away from the limiting circle.
Therefore,
\begin{equation}\label{eq:single-domains}
\mathbf1_{\Omega_n(X,Y)}\longrightarrow\mathbf1_{\Omega(X,Y)}
\quad\text{in }L^1(\mathbb H).
\end{equation}

Write $|\cdot|_{\rm euc}$ for ordinary area in the upper half-plane.
The Jacobian relative to normalized area under the affine change of variables is $h_n^2/\pi$, so
\[
M_{\rho_0}\varphi(b_n)
=\frac1{|\Omega_n(X,Y)|_{\rm euc}}
\int_{\Omega_n(X,Y)}F_n(x,t)\dd x\dd t.
\]
The left-hand side tends to zero, while the denominator tends to $\pi d_0^2Y^2>0$. Thus the integral on the right tends to zero.
Weak-star convergence and \eqref{eq:single-domains} give
\[
\left|\int_{\Omega_n(X,Y)}F_n-\int_{\Omega(X,Y)}F\right|
\le \norm\varphi_\infty\norm{\mathbf1_{\Omega_n}-\mathbf1_{\Omega}}_1
 +\left|\int_{\Omega(X,Y)}(F_n-F)\right|\longrightarrow0.
\]
Consequently, $\int_{B((X,c_0Y),d_0Y)}F=0$ for all $X,Y$.
Taking $t=c_0Y$ and $q=d_0/c_0=2\rho_0/(1+\rho_0^2)<1$,
Lemma~\ref{lem:halfplane-unique} yields $F=0$.
\end{proof}
We can now complete the proof of Theorem 1.1.
\begin{proof}[Theorem 1.1: \textnormal{(v)}$\Rightarrow$\textnormal{(ii)}]
By \textnormal{(v)}, there is a fixed $\rho_0=\tanh r_*>0$ such that
$M_{\rho_0}\varphi(a)\to0$. Fix any $\rho\in(0,1)$.
If $M_\rho\varphi$ does not tend to zero at the boundary, there exist $\epsilon>0$ and a sequence of centers
$a_n=(1-h_n)\zeta_n$, with $h_n\downarrow0$, such that
$|M_\rho\varphi(a_n)|\ge\epsilon$.
A subsequence with strictly decreasing $h_n$ can be extracted from any sequence approaching the boundary.
Apply the scaling in Lemma~\ref{lem:single-blowup} and pass to a weak-star convergent subsequence to obtain $F_n\overset{w^*}\to0$.

Let $\Omega_n^\rho$ be the scaled preimage of $E_\rho(a_n)$.
The same center and radius calculation as above, with $X=0$ and $Y=1$, gives
\[
\mathbf1_{\Omega_n^\rho}\longrightarrow\mathbf1_{\Omega^\rho}\quad\text{in }L^1,
\qquad
\Omega^\rho=B\left(\left(0,\frac{1+\rho^2}{1-\rho^2}\right),
\frac{2\rho}{1-\rho^2}\right).
\]
Hence
\[
\left|\int_{\Omega_n^\rho}F_n\right|
\le\left|\int_{\Omega^\rho}F_n\right|
+\norm\varphi_\infty\norm{\mathbf1_{\Omega_n^\rho}-\mathbf1_{\Omega^\rho}}_1\longrightarrow0.
\]
Since $|\Omega_n^\rho|_{\rm euc}\to|\Omega^\rho|_{\rm euc}>0$, the exact change-of-variables identity gives
\[
M_\rho\varphi(a_n)=
\frac{\int_{\Omega_n^\rho}F_n}{|\Omega_n^\rho|_{\rm euc}}\longrightarrow0,
\]
a contradiction. Thus the averages vanish at the boundary for every fixed radius, proving \textnormal{(ii)}.

We have already proved that \textnormal{(i)}, \textnormal{(ii)}, and \textnormal{(iii)} are equivalent.
Clearly, \textnormal{(ii)}$\Rightarrow$\textnormal{(iv)}$\Rightarrow$\textnormal{(v)}.
Together with \textnormal{(v)}$\Rightarrow$\textnormal{(ii)}, just established, this proves the equivalence of all five conditions.
In particular, for any fixed $r_0>0$ prescribed in advance, vanishing of the corresponding local averages suffices for compactness.
\end{proof}
\section{Proof of Theorem 1.2}\label{sec:carleson}
This section proves the Carleson box criterion for general bounded complex-valued symbols.
Necessity follows from automorphism-induced unitary transformations and $L^1$ convergence of the weights associated with Carleson box averages.
For sufficiency, we rescale the symbol near the boundary to the upper half-plane and use a uniqueness lemma for box integrals to exclude nonzero weak-star limits,
then reduce the problem to the Bergman disk condition in Theorem 1.1.
A direct calculation gives the normalized area of a Carleson box:
\begin{equation}\label{eq:boxarea}
|S(\theta,h)|=\frac1\pi\int_{\theta-h}^{\theta+h}\int_{1-h}^1r\dd r\dd t
=\frac{2h^2}{\pi}\left(1-\frac h2\right).
\end{equation}

\subsection{Necessity in Theorem 1.2}
\begin{proof}
Suppose that $T_\varphi$ is compact, and write $M=\norm\varphi_\infty$.
Take arbitrary $h_n\downarrow0$ and an arbitrary sequence of angles $\theta_n$. We will prove that
$C_{h_n}\varphi(\theta_n)\to0$, which gives the required uniform angular limit.
Set
\[
b_n=1-h_n,\qquad
\tau_n(w)=e^{i\theta_n}\sigma_{b_n}(w),\qquad
\Phi_n(w)=\varphi(\tau_n(w)).
\]
Define
\[
(V_nf)(z)=f(\tau_n^{-1}(z))(\tau_n^{-1})'(z).
\]
By the area change-of-variables formula, $V_n$ is unitary on $\A$. For each fixed analytic polynomial $p$,
$\norm{V_np}_2=\norm p_2$. Moreover, since
\[
\tau_n^{-1}(z)=\sigma_{b_n}(e^{-i\theta_n}z),
\qquad
|(\tau_n^{-1})'(z)|=\frac{1-b_n^2}{|1-b_ne^{-i\theta_n}z|^2},
\]
for every fixed $R<1$,
\[
\sup_{|z|\le R}|V_np(z)|
\le\norm p_\infty\frac{1-b_n^2}{(1-R)^2}\longrightarrow0.
\]
As in Lemma~\ref{lem:weak}, density of the reproducing kernels gives $V_np\rightharpoonup0$.
Compactness implies $\norm{T_\varphi V_np}_2\to0$, so for fixed analytic polynomials $p,q$,
\begin{align*}
\int_\D\Phi_n(w)p(w)\overline{q(w)}\dA(w)
&=\langle T_\varphi V_np,V_nq\rangle\longrightarrow0.
\end{align*}
Mixed polynomials are dense in $L^1(\D)$, and $\norm{\Phi_n}_\infty\le M$.
Thus the same approximation argument as in Lemma~\ref{lem:weakstar} gives
\begin{equation}\label{eq:boxnecess-weakstar}
\Phi_n\overset{w^*}{\longrightarrow}0\quad\text{in }L^\infty(\D).
\end{equation}

Write
\[
\Lambda_h=\{w\in\D:\sigma_{1-h}(w)\in S(0,h)\}.
\]
Since rotations preserve area, $\tau_n^{-1}(S(\theta_n,h_n))=\Lambda_{h_n}$.
By \eqref{eq:boxarea} and the derivative formula for disk automorphisms,
\[
|S(0,h)|=\frac{h^2(2-h)}\pi,
\qquad
|\sigma_{1-h}'(w)|^2=
\frac{h^2(2-h)^2}{|1-(1-h)w|^4}.
\]
Therefore, defining
\begin{equation}\label{eq:boxnecess-weight}
H_h(w)=\pi(2-h)\frac{\mathbf1_{\Lambda_h}(w)}{|1-(1-h)w|^4},
\end{equation}
we obtain the identity
\begin{equation}\label{eq:boxnecess-pairing}
C_{h_n}\varphi(\theta_n)=\int_\D\Phi_n(w)H_{h_n}(w)\dA(w).
\end{equation}
In particular, $H_h\ge0$ and $\int_\D H_h\dA=1$.

If $w\in\Lambda_h$, let $b=1-h$ and $z=\sigma_b(w)\in S(0,h)$.
Write $z=re^{it}$, where $1-h<r<1$ and $|t|<h$. Then
\[
|1-z|\le(1-r)+|1-e^{it}|\le2h,
\qquad |1-bz|\le |1-z|+h|z|\le3h.
\]
The automorphism identity gives
\[
1-bw=\frac{1-b^2}{1-bz},
\qquad |1-bw|\ge\frac{h(2-h)}{3h}\ge\frac13.
\]
This yields the uniform bound on the entire disk
\begin{equation}\label{eq:boxnecess-domination}
0\le H_h(w)\le162\pi\qquad(w\in\D,\ 0<h<1).
\end{equation}

For fixed $w\in\D$, write $z_h=\sigma_{1-h}(w)$. Then
\begin{equation}\label{eq:boxnecess-cayley}
\frac{1-z_h}{h}=\frac{1+w}{1-(1-h)w}
\longrightarrow Q(w):=\frac{1+w}{1-w}.
\end{equation}
and $\Re Q(w)=(1-|w|^2)/|1-w|^2>0$.
Since $z_h=1-hQ(w)+o(h)$, choosing a continuous branch of the argument near $1$ gives
\[
\frac{1-|z_h|}{h}\longrightarrow\Re Q(w),
\qquad
\frac{\arg z_h}{h}\longrightarrow-\Im Q(w).
\]
Thus, except where $\Re Q(w)=1$ or $\Im Q(w)=\pm1$,
\[
\mathbf1_{\Lambda_h}(w)\longrightarrow\mathbf1_\Lambda(w),
\quad
\Lambda=\{w\in\D:0<\Re Q(w)<1,\ |\Im Q(w)|<1\}.
\]
These exceptional sets have area zero: $Q$ is a smooth conformal diffeomorphism from the disk onto the right half-plane,
and the preimages of the relevant lines are circular arcs or line segments, each of area zero in local coordinate charts.
Consequently, almost everywhere,
\[
H_h(w)\longrightarrow H(w):=
\frac{2\pi\mathbf1_\Lambda(w)}{|1-w|^4}.
\]
By \eqref{eq:boxnecess-domination} and $|\D|=1$, the dominated convergence theorem gives
\begin{equation}\label{eq:boxnecess-L1}
H\in L^1(\D),\qquad \norm{H_h-H}_1\longrightarrow0.
\end{equation}
Note that $Q'(w)=2/(1-w)^2$. Thus,
in the $Q$ coordinates, $H(w)\mathrm dA(w)$ is exactly one-half of ordinary area measure on the rectangle
$\{0<\Re Q<1,\ |\Im Q|<1\}$.
Its total mass is $1$, consistent with $\int H_h\dA=1$.

By \eqref{eq:boxnecess-pairing},
\[
|C_{h_n}\varphi(\theta_n)|
\le\abs{\int_\D\Phi_nH\dA}+M\norm{H_{h_n}-H}_1\longrightarrow0.
\]
The first term vanishes by \eqref{eq:boxnecess-weakstar}, and the second by \eqref{eq:boxnecess-L1}.
If \eqref{eq:boxcondition} failed, we could choose $\varepsilon>0$, a decreasing sequence $h_n\to0$,
and angles $\theta_n$ such that $|C_{h_n}\varphi(\theta_n)|\ge\varepsilon$,
contradicting the conclusion above. Thus the Carleson box condition holds, proving necessity.
\end{proof}
\subsection{Sufficiency in Theorem 1.2}
In what follows, write $\mathbb H=\mathbb R\times(0,\infty)$, with product Lebesgue measure
$\dd x\dd t$.

\begin{lemma}\label{lem:boxunique}
Let $F\in L^\infty(\mathbb H)$. If, for every $x\in\mathbb R$ and $h>0$,
\begin{equation}\label{eq:zerobox}
\int_{x-h}^{x+h}\int_0^hF(u,t)\dd t\dd u=0,
\end{equation}
then $F=0$ almost everywhere.
\end{lemma}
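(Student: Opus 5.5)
The plan is to reduce the two-dimensional box condition to a one-dimensional statement about the horizontal spectrum of the vertical primitive of $F$. Then I would show that this spectrum is so rigid, as a function of the height, that it must be empty. Set
\[
G(u,h)=\int_0^hF(u,t)\dd t,\qquad u\in\mathbb R,\ h>0 .
\]
Then $\norm{G(\cdot,h)}_\infty\le h\norm F_\infty$, and, crucially,
\[
\norm{G(\cdot,h)-G(\cdot,h')}_\infty\le\norm F_\infty|h-h'| .
\]
Hypothesis \eqref{eq:zerobox} says exactly that $\mathbf1_{[-h,h]}*G(\cdot,h)=0$ on $\mathbb R$ for every $h>0$. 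Since $G(\cdot,h)$ is bounded, it is a tempered distribution. Taking the Fourier transform in $u$ gives
\[
m_h(\xi)\,\widehat{G(\cdot,h)}=0,\qquad m_h(\xi)=\frac{2\sin(h\xi)}{\xi}.
\]
The multiplier $m_h$ is smooth, satisfies $m_h(0)=2h\neq0$, and has only simple zeros, at $\xi\in Z_h:=\{k\pi/h:k\in\mathbb Z\setminus\{0\}\}$. Hence $\widehat{G(\cdot,h)}$ is supported in $Z_h$. Near each zero $\xi_k=k\pi/h$ we can write $m_h(\xi)=(\xi-\xi_k)g(\xi)$ with $g(\xi_k)\ne0$, and $(\xi-\xi_k)T=0$ forces $T=c\,\delta_{\xi_k}$. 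So locally $\widehat{G(\cdot,h)}=c_k(h)\delta_{k\pi/h}$, with no derivatives of deltas appearing.

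To avoid summing a possibly non-convergent series, I would work only with localized pairings. Fix $k\ne0$ and $h_0>0$, and put $\xi_0=k\pi/h_0$. Choose $\vartheta\in\mathcal S(\mathbb R)$ with $\widehat\vartheta\in C_c^\infty$ supported in a small neighborhood $I$ of $\xi_0$ that meets $Z_{h_0}$ only at $\xi_0$, and with $\widehat\vartheta(\xi_0)=1$. Then
\[
\langle G(\cdot,h_0),\vartheta\rangle=c\,c_k(h_0),
\]
where $c\neq0$ depends only on the Fourier normalization. Now choose $h'$ close to $h_0$ such that $Z_{h'}\cap I=\emptyset$; this is possible by shrinking $I$ and moving $h'$ slightly, since $Z_{h'}$ moves continuously with $h'$ and the points near $\xi_0$ move with nonzero speed $k\pi/h'^2$. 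Then $\langle G(\cdot,h'),\vartheta\rangle=0$. This alone only bounds $|c_k(h_0)|$ by $\norm F_\infty|h_0-h'|\,\norm\vartheta_1$, and $\norm\vartheta_1$ blows up as $I$ shrinks. The fix is to replace $\vartheta$ by modulated dilates $\vartheta_R(u)=R^{-1}e^{i\xi_0u}\vartheta_0(u/R)$ with $\widehat{\vartheta_0}(0)=1$. These have $\norm{\vartheta_R}_1$ bounded, Fourier support shrinking to $\{\xi_0\}$, and still pick out $c_k(h_0)$ (Bohr-mean type averages). For fixed $R$, take $h'\neq h_0$ so close that $Z_{h'}$ avoids the support of $\widehat{\vartheta_R}$. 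Since the shrinking Fourier support needs $|h'-h_0|\lesssim 1/R$, this gives
\[
|c_k(h_0)|\le C\norm F_\infty|h_0-h'|\le C'\norm F_\infty/R .
\]
Letting $R\to\infty$ yields $c_k(h_0)=0$.

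Hence $\widehat{G(\cdot,h)}$ vanishes near every point of $Z_h$, and also off $Z_h$, so $G(\cdot,h)=0$ for every $h>0$. Then, for a.e.\ $u$, the function $h\mapsto\int_0^hF(u,t)\dd t$ vanishes identically, and Lebesgue differentiation gives $F=0$ almost everywhere; Fubini justifies the a.e.\ statements.

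I expect the main obstacle to be the quantitative matching in the localization step. One must check that the support of $\widehat{\vartheta_R}$ has width $\asymp 1/R$ while moving $h$ by $\asymp 1/R$ shifts $k\pi/h$ by more than that width. This holds since the shift is $\approx k\pi|h-h'|/h_0^2$ and $k,h_0$ are fixed, so choosing $|h-h'|=A/R$ with $A$ large relative to $h_0^2/|k|$ works. One must also check that the pairing of $\vartheta_R$ with the local delta gives exactly $c_k(h_0)$ without contamination from the other points of $Z_{h_0}$, which are separated by $\pi/h_0$.
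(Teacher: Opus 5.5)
Your proof is correct and follows essentially the paper's strategy. Both pass to the vertical primitive $G(\cdot,h)=\int_0^hF(\cdot,t)\dd t$, use that it is $\norm F_\infty$-Lipschitz in $h$ in sup norm, show that its horizontal spectrum lies in $\{m\pi/h: m\neq0\}$, and kill each spectral coefficient by comparing with a nearby height whose spectrum misses that frequency.

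The paper implements this more elementarily. Differentiating $\int_{x-h}^{x+h}G(u,h)\dd u=0$ in $x$ gives $2h$-periodicity directly, with no distributional Fourier analysis. It then uses the exact Bohr mean $\lim_{T\to\infty}\frac1{2T}\int_{-T}^TG(x,k)e^{-i\lambda x}\dd x$ with $\lambda=m\pi/h$; this functional is bounded by the sup norm and vanishes whenever $k/h$ is irrational. So $k\to h$ can be taken freely, and your $|h'-h_0|\asymp1/R$ wave-packet balancing is not needed.
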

\begin{proof}
Write $M=\norm F_\infty$. By Fubini's theorem, there is a full-measure set of horizontal coordinates $x$
such that, for each of these $x$, the function $t\mapsto F(x,t)$ is integrable on bounded intervals and has absolute value at most $M$ almost everywhere.
On this set, define
\[
G_h(x)=\int_0^hF(x,t)\dd t,
\]
On the exceptional null set, set $G_h(x)=0$. Then, for all $h,k>0$,
\begin{equation}\label{eq:heightlip}
\norm{G_h}_\infty\le Mh,
\qquad \norm{G_h-G_k}_\infty\le M|h-k|.
\end{equation}

By assumption,
\[
\int_{x-h}^{x+h}G_h(u)\dd u=0\qquad(x\in\mathbb R).
\]
The left-hand side is locally absolutely continuous in $x$, with almost-everywhere derivative
$G_h(x+h)-G_h(x-h)$. Therefore
$G_h(x+2h)=G_h(x)$ almost everywhere, so $G_h$ has a $2h$-periodic representative.
Taking $x=h$ also gives $\int_0^{2h}G_h(u)\dd u=0$.

Fix $h>0$. For $m\in\mathbb Z\setminus\{0\}$, set
\[
\lambda=\frac{m\pi}{h},\qquad
c_m(h)=\frac1{2h}\int_0^{2h}G_h(x)e^{-i\lambda x}\dd x.
\]
Since the integrand is $2h$-periodic,
\begin{equation}\label{eq:longmean}
c_m(h)=\lim_{T\to\infty}\frac1{2T}\int_{-T}^T G_h(x)e^{-i\lambda x}\dd x.
\end{equation}
If $k/h$ is irrational, then $e^{-2ik\lambda}=e^{-2\pi i mk/h}\ne1$.
Using the $2k$-periodicity of $G_k$, for every positive integer $N$ we have
\[
\int_0^{2kN}G_k(x)e^{-i\lambda x}\dd x
=\left(\int_0^{2k}G_k(x)e^{-i\lambda x}\dd x\right)
 \sum_{\nu=0}^{N-1}e^{-2ik\lambda\nu}.
\]
For fixed $h,m,k$, the geometric partial sums satisfy
\[
\abs{\sum_{\nu=0}^{N-1}e^{-2ik\lambda\nu}}
\le\frac{2}{|1-e^{-2ik\lambda}|},
\]
Thus the right-hand side is bounded independently of $N$. 
The negative half-axis is treated in the same way. On the remaining intervals of length less than one period,
the absolute value of the integral is also bounded by a constant independent of $T$. Hence
\[
\lim_{T\to\infty}\frac1{2T}\int_{-T}^T G_k(x)e^{-i\lambda x}\dd x=0.
\]
Combining \eqref{eq:heightlip} and \eqref{eq:longmean}, we obtain
\[
|c_m(h)|\le\norm{G_h-G_k}_\infty\le M|h-k|.
\]
Only now do we let $k\to h$, keeping $k/h$ irrational, to conclude that $c_m(h)=0$.
The zeroth coefficient already vanishes because the mean over a period is zero.

Since $G_h\in L^2(0,2h)$, vanishing of all its Fourier coefficients implies $G_h=0$ almost everywhere.
For completeness, uniqueness follows as follows: on the circle, finite linear combinations of $e^{im\pi x/h}$ are closed under complex conjugation,
contain constants, and separate points. The Stone--Weierstrass theorem  therefore makes them uniformly dense in the continuous periodic functions.
Continuous periodic functions are dense in $L^2(0,2h)$, so the span of these exponentials is dense in $L^2$.
Consequently, $G_h$, being orthogonal to all of them, must vanish.

Removing the union of the corresponding null sets for all positive rational $h$, we find that, for almost every $x$,
$G_h(x)=0$ for every positive rational $h$. By the pointwise integral version of \eqref{eq:heightlip},
the map $h\mapsto G_h(x)$ is continuous, so the identity holds for every real $h>0$.
This map is absolutely continuous on finite height intervals, with almost-everywhere derivative $F(x,h)$.
Thus $F(x,h)=0$ almost everywhere, and another application of Fubini's theorem gives the conclusion.
\end{proof}

\begin{lemma}\label{lem:boxblowup}
Suppose that $\varphi$ satisfies \eqref{eq:boxcondition}. For arbitrary $h_n\downarrow0$ and
$\theta_n\in\mathbb R$, define on $\mathbb H$
\begin{equation}\label{eq:scaledphi}
F_n(x,t)=
\begin{cases}
\varphi\bigl((1-h_nt)e^{i(\theta_n+h_nx)}\bigr),&0<t<1/h_n,\\
0,&t\ge1/h_n.
\end{cases}
\end{equation}
Then every weak-star convergent subsequence has zero limit in $L^\infty(\mathbb H)$;
moreover, every subsequence has a further weak-star convergent subsequence.
\end{lemma}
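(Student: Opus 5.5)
The argument runs parallel to Lemma~\ref{lem:single-blowup}, with Lemma~\ref{lem:boxunique} taking the place of Lemma~\ref{lem:halfplane-unique}.

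\textbf{Step 1: existence of weak-star limits.} We have $\norm{F_n}_\infty\le\norm\varphi_\infty$, and $L^1(\mathbb H)$ is separable. The metrizable bounded-ball form of Banach--Alaoglu therefore gives a weak-star convergent further subsequence of any subsequence. Fix such a subsequence, still written $F_n$, and let $F$ be its limit.

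\textbf{Step 2: boxes are exact rescaled Carleson boxes.} Fix $x\in\mathbb R$ and $h>0$, and write $R(x,h)=(x-h,x+h)\times(0,h)$. Put $\eta_n=hh_n$ and $\vartheta_n=\theta_n+h_nx$. Under the map $(u,t)\mapsto(1-h_nt)e^{i(\theta_n+h_nu)}$, the rectangle $R(x,h)$ is carried onto
\[
S(\vartheta_n,\eta_n)=\{se^{i\tau}:1-\eta_n<s<1,\ |\tau-\vartheta_n|<\eta_n\}.
\]
The polar area element gives
\[
\frac1\pi\int_{S(\vartheta_n,\eta_n)}\varphi\,\dd x\dd y=\frac{h_n^2}{\pi}\int_{R(x,h)}F_n(u,t)(1-h_nt)\dd u\dd t .
\]
Combining this with \eqref{eq:boxarea}, we get
\[
C_{\eta_n}\varphi(\vartheta_n)\,\frac{2\eta_n^2}{\pi}\Bigl(1-\frac{\eta_n}2\Bigr)=\frac{h_n^2}{\pi}\int_{R(x,h)}F_n(1-h_nt)\dd u\dd t.
\]
Dividing by $h_n^2/\pi$ yields
\[
\int_{R(x,h)}F_n(1-h_nt)\dd u\dd t=2h^2\Bigl(1-\frac{\eta_n}{2}\Bigr)C_{\eta_n}\varphi(\vartheta_n).
\]
Since $\eta_n\to0$, the uniform condition \eqref{eq:boxcondition} makes the right-hand side tend to $0$. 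The angle $\vartheta_n$ varies with $n$, and this is exactly why the uniformity in $\theta$ is essential. On the other side, $R(x,h)$ lies in $\{t<1/h_n\}$ for large $n$, and
\[
\Bigl|\int_{R(x,h)}F_nh_nt\Bigr|\le\norm\varphi_\infty h_n\cdot 2h^3\to0.
\]
Hence $\int_{R(x,h)}F_n\to0$.

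\textbf{Step 3: identify the limit.} Since $\mathbf 1_{R(x,h)}\in L^1(\mathbb H)$, weak-star convergence gives
\[
\int_{x-h}^{x+h}\int_0^hF\,\dd t\dd u=0
\]
for every $x\in\mathbb R$ and $h>0$. Lemma~\ref{lem:boxunique} then gives $F=0$ almost everywhere.

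\textbf{Main obstacle.} The only delicate point is the bookkeeping in Step 2. One must check that the Jacobian factor $(1-h_nt)$ and the area normalization \eqref{eq:boxarea} contribute only $o(1)$ errors, uniformly on the fixed box. Everything else is a routine application of the already established uniqueness lemma.
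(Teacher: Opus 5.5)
Your proposal is correct and follows the paper's proof step for step. The shared steps are: Banach--Alaoglu with separability of $L^1(\mathbb H)$; the exact identification of $\int_{R(x,h)}F_n(1-h_nt)$ with $2h^2(1-hh_n/2)\,C_{hh_n}\varphi(\theta_n+h_nx)$ via \eqref{eq:boxarea}; the uniform-in-angle condition \eqref{eq:boxcondition} to handle the moving angles; an $O(h_n)$ bound for dropping the Jacobian factor; and finally Lemma~\ref{lem:boxunique}. (Your bound $2h^3h_n\norm\varphi_\infty$ is valid, though the exact integral gives the sharper $h^3h_n\norm\varphi_\infty$ used in the paper.)
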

\begin{proof}
Write $M=\norm\varphi_\infty$. On each coordinate chart without angular wrapping, the scaling map is a nondegenerate smooth transformation.
These charts can be chosen to form a countable cover. Hence the pullback symbol is well defined almost everywhere, and
$\norm{F_n}_\infty\le M$.
By the Banach--Alaoglu theorem and the separability of $L^1(\mathbb H)$,
bounded balls in $L^\infty(\mathbb H)$ are compact and metrizable in the weak-star topology.
We can therefore extract a weak-star convergent subsequence.
Suppose that along such a subsequence, still indexed by $n$,
\[
F_n\overset{w^*}{\longrightarrow}F\quad\text{in }L^\infty(\mathbb H).
\]

Fix arbitrary $x\in\mathbb R$ and $H>0$. For sufficiently large $n$, we have $Hh_n<1$,
and the scaled coordinates map the rectangle $(x-H,x+H)\times(0,H)$ bijectively onto
$S(\theta_n+h_nx,Hh_n)$, with area Jacobian
\[
\mathrm dA=\frac{h_n^2}{\pi}(1-h_nt)\dd u\dd t.
\]
By \eqref{eq:boxarea},
\begin{align}
&\int_{x-H}^{x+H}\int_0^H F_n(u,t)(1-h_nt)\dd t\dd u\notag\\
&\quad=\frac\pi{h_n^2}\int_{S(\theta_n+h_nx,Hh_n)}\varphi\dA\notag\\
&\quad=2H^2\left(1-\frac{Hh_n}{2}\right)
 C_{Hh_n}\varphi(\theta_n+h_nx)\longrightarrow0.
\label{eq:scaledbox}
\end{align}
The last step uses precisely the angular uniformity in condition \eqref{eq:boxcondition}.
On the other hand, omitting the factor $1-h_nt$ from the Jacobian introduces an error of at most
\[
Mh_n\int_{x-H}^{x+H}\int_0^Ht\dd t\dd u=Mh_nH^3\longrightarrow0.
\]
Therefore,
\[
\int_{x-H}^{x+H}\int_0^HF_n(u,t)\dd t\dd u\longrightarrow0.
\]
Although the rectangle meets the boundary $t=0$, it has finite area, so its indicator belongs to $L^1(\mathbb H)$.
The subsequence converges weak-star against all of $L^1(\mathbb H)$.
Thus weak-star convergence gives
\[
\int_{x-H}^{x+H}\int_0^HF(u,t)\dd t\dd u=0.
\]
The same weak-star convergent subsequence works for all $L^1$ test functions, so the conclusion holds for all $x,H$,
without selecting new subsequences for different rectangles. Lemma~\ref{lem:boxunique} now applies, giving $F=0$ almost everywhere.
\end{proof}
\begin{proof}[Proof of sufficiency in Theorem 1.2]
Fix $0<\rho<1$. Suppose, to the contrary, that \eqref{eq:boximpliesdisk} fails. Then there exist
$\varepsilon_0>0$ and a sequence with $|a_n|\to1$ such that
\begin{equation}\label{eq:boxcontradiction}
\abs{\frac1{|E_\rho(a_n)|}\int_{E_\rho(a_n)}\varphi\dA}\ge\varepsilon_0
\qquad\text{for all }n.
\end{equation}
Discarding finitely many terms, assume $a_n\ne0$. Write
\[
a_n=b_ne^{i\theta_n},\qquad b_n=1-h_n,\qquad h_n=1-|a_n|\to0.
\]
If necessary, pass to a further subsequence such that $h_n\downarrow0$. Define $F_n$ by \eqref{eq:scaledphi}.
By Lemma~\ref{lem:boxblowup}, after taking another subsequence,
\begin{equation}\label{eq:scaledzero}
F_n\overset{w^*}{\longrightarrow}0\quad\text{in }L^\infty(\mathbb H).
\end{equation}

Rotating $E_\rho(a_n)$ through the angle $-\theta_n$ gives a Euclidean disk with center and radius
\[
c_n=\frac{(1-\rho^2)b_n}{1-\rho^2b_n^2},
\qquad d_n=\frac{\rho(1-b_n^2)}{1-\rho^2b_n^2}.
\]
In the coordinate strip $|x|<\pi/h_n$, $0<t<1/h_n$, which avoids angular wrapping, define
\[
\Omega_n=\left\{(x,t):
|x|<\frac\pi{h_n},\quad 0<t<\frac1{h_n},\quad
\left|(1-h_nt)e^{ih_nx}-c_n\right|<d_n\right\}.
\]
For sufficiently large $n$, the original Euclidean disk lies near the positive real axis and does not meet the negative-real-axis cut of these coordinates.
Hence these coordinates give a bijection with $E_\rho(a_n)$.

A direct calculation gives
\begin{align*}
\frac{1-c_n}{h_n}
&=\frac{1+\rho^2b_n}{1-\rho^2b_n^2}
\longrightarrow c_\rho:=\frac{1+\rho^2}{1-\rho^2},\\
\frac{d_n}{h_n}
&=\frac{\rho(1+b_n)}{1-\rho^2b_n^2}
\longrightarrow d_\rho:=\frac{2\rho}{1-\rho^2}.
\end{align*}
In particular, $c_\rho-d_\rho=(1-\rho)/(1+\rho)>0$.
Set
\[
\Omega_\rho=\{(x,t):x^2+(t-c_\rho)^2<d_\rho^2\}.
\]
This is a fixed disk whose closure lies in $\mathbb H$.

We first verify that all the $\Omega_n$, after discarding finitely many terms, lie in a common compact set.
The radial coordinate of each of their points satisfies
\[
\frac{1-c_n-d_n}{h_n}<t<\frac{1-c_n+d_n}{h_n}.
\]
The two endpoints converge to $c_\rho-d_\rho>0$ and $c_\rho+d_\rho<\infty$, respectively.
Since $c_n>d_n>0$ for large $n$, the absolute value of the polar angle of a point in the disk with center $c_n$ and radius $d_n$
is at most $\arcsin(d_n/c_n)$. Consequently,
\[
|x|\le\frac1{h_n}\arcsin(d_n/c_n),
\]
The right-hand side is uniformly bounded. Thus there is a fixed compact rectangle $K\Subset\mathbb H$ containing $\Omega_\rho$ and all sufficiently large $\Omega_n$.

Taylor expansion uniformly on $K$ gives
\[
\frac{(1-h_nt)e^{ih_nx}-c_n}{h_n}
=\frac{1-c_n}{h_n}-t+ix+O_K(h_n)
\longrightarrow c_\rho-t+ix.
\]
Together with $d_n/h_n\to d_\rho$, this implies that, at every point outside the circle $\partial\Omega_\rho$,
$\mathbf1_{\Omega_n}\to\mathbf1_{\Omega_\rho}$.
The limiting circle has two-dimensional measure zero, and the common support $K$ has finite area. The dominated convergence theorem therefore gives
\begin{equation}\label{eq:regionL1}
\norm{\mathbf1_{\Omega_n}-\mathbf1_{\Omega_\rho}}_{L^1(\mathbb H)}\longrightarrow0.
\end{equation}

Set
\[
J_n=\int_{\Omega_n}(1-h_nt)\dd x\dd t,
\qquad
W_n(x,t)=\frac{(1-h_nt)\mathbf1_{\Omega_n}(x,t)}{J_n}.
\]
Here $1-h_nt>0$ on $\Omega_n$. By the common compact support and \eqref{eq:regionL1},
\[
J_n\longrightarrow J:=\int_{\Omega_\rho}\dd x\dd t=\pi d_\rho^2>0,
\]
More explicitly, write $A_n=(1-h_nt)\mathbf1_{\Omega_n}$, $A=\mathbf1_{\Omega_\rho}$, and $W=A/J$,
and let $T_K=\sup\{t:(x,t)\in K\}$ and $|K|_{\rm E}=\int_K\dd x\dd t$.
Then
\[
\norm{A_n-A}_1
\le\norm{\mathbf1_{\Omega_n}-\mathbf1_{\Omega_\rho}}_1
+h_nT_K|K|_{\rm E}\longrightarrow0.
\]
Hence $|J_n-J|\le\norm{A_n-A}_1$, and $J_n\ge J/2$ for sufficiently large $n$. Therefore,
\[
\norm{W_n-W}_1
\le\frac{\norm{A_n-A}_1+|J_n-J|}{J_n}
\le\frac4J\norm{A_n-A}_1\longrightarrow0,
\]
that is,
\begin{equation}\label{eq:weightboxL1}
W_n\longrightarrow W=\frac{\mathbf1_{\Omega_\rho}}{J}
\quad\text{in }L^1(\mathbb H).
\end{equation}
Note that $J$ denotes ordinary Euclidean area in the upper half-plane, as distinct from normalized area on the disk.

The area change of variables gives
\begin{align*}
\frac1{|E_\rho(a_n)|}\int_{E_\rho(a_n)}\varphi\dA
&=\frac{\int_{\Omega_n}F_n(x,t)(1-h_nt)\dd x\dd t}
 {\int_{\Omega_n}(1-h_nt)\dd x\dd t}\\
&=\int_{\mathbb H}F_n(x,t)W_n(x,t)\dd x\dd t.
\end{align*}
By \eqref{eq:scaledzero} and \eqref{eq:weightboxL1},
\[
\abs{\int_{\mathbb H}F_nW_n}
\le\abs{\int_{\mathbb H}F_nW}
+\norm\varphi_\infty\norm{W_n-W}_1\longrightarrow0.
\]
This contradicts \eqref{eq:boxcontradiction}, so \eqref{eq:boximpliesdisk} holds.

Finally, $\rho\in(0,1)$ was arbitrary and fixed, so condition
\textnormal{(ii)} of Theorem 1.1 holds. The sufficiency part of that theorem implies that $T_\varphi$ is compact.
\end{proof}

\section{A counterexample: angular pointwise vanishing without compactness}
An interesting problem is whether angular pointwise vanishing of the local averages of the symbol over Carleson boxes is equivalent to compactness of the Toeplitz operator. The following counterexample shows that angular pointwise vanishing of these local averages does not imply compactness.

\begin{theorem}[Angular pointwise vanishing without compactness]\label{thm:box-pointwise-counterexample}
Let
\[
\theta_n=2^{-n},\qquad h_n=2^{-n^2-10},\qquad n\ge1,
\]
and define
\[
E_n=\{re^{it}:1-h_n<r<1-h_n/2,\quad
                  |t-\theta_n|<h_n/4\},\qquad
\varphi=\mathbf1_{\bigcup_{n\ge1}E_n}.
\]
Then $\varphi$ is a measurable bounded symbol taking values in $\{0,1\}$, with the following properties:
\begin{enumerate}[label=\textnormal{(\roman*)}]
\item For every fixed $\theta\in\mathbb R$,
\begin{equation}\label{eq:counter-pointwise}
\lim_{h\downarrow0}\frac1{|S(\theta,h)|}
                  \int_{S(\theta,h)}\varphi\dA=0.
\end{equation}
\item This convergence is not uniform in $\theta$. More precisely, for every $n$,
\begin{equation}\label{eq:counter-nonuniform}
\sup_\theta\frac1{|S(\theta,h_n)|}
                  \int_{S(\theta,h_n)}\varphi\dA\ge\frac1{32}.
\end{equation}
\item $T_\varphi$ is not compact on $\A$.
\end{enumerate}
\end{theorem}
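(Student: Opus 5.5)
The plan is to treat the three assertions separately, using elementary area bookkeeping for (i) and (ii) and reducing (iii) to the necessity half of Theorem~\ref{thm:carleson}. Measurability and the range $\{0,1\}$ are immediate, since each $E_n$ is a polar rectangle. I will also use that the $E_n$ are pairwise disjoint; this follows because their radial ranges $(1-h_n,1-h_n/2)$ are disjoint, as $h_{n+1}<h_n/2$.

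\emph{Assertion (ii).} Take $\theta=\theta_n$ and $h=h_n$. Then $E_n\subset S(\theta_n,h_n)$, since the radial range $(1-h_n,1-h_n/2)$ lies inside $(1-h_n,1)$ and the angular half-width $h_n/4$ is less than $h_n$. Computing in polar coordinates gives
\[
|E_n|=\frac1\pi\cdot\frac{h_n}2\int_{1-h_n}^{1-h_n/2}r\dd r\ \ge\ \frac{h_n^2}{4\pi}(1-h_n).
\]
Combining this with \eqref{eq:boxarea}, $|S(\theta_n,h_n)|\le 2h_n^2/\pi$, yields a ratio of at least $(1-h_n)/8\ge 1/32$.

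\emph{Assertion (iii).} This follows from (ii) together with the necessity part of Theorem~\ref{thm:carleson}. If $T_\varphi$ were compact, then $\sup_\theta|C_h\varphi(\theta)|\to0$, which is incompatible with \eqref{eq:counter-nonuniform} along $h=h_n\downarrow0$.

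\emph{Assertion (i).} This is the main work. I will split according to whether $\theta\equiv0 \pmod{2\pi}$.

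Observe first that $S(\theta,h)$ can meet $E_n$ only under two conditions. The radial condition is $h>h_n/2$. The angular condition is $\operatorname{dist}(\theta,\theta_n+2\pi\mathbb Z)<h+h_n/4$.

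Suppose $\theta\not\equiv0$. Since $\theta_n\to0$, there are only finitely many $n$ with $\operatorname{dist}(\theta,\theta_n+2\pi\mathbb Z)<\frac12\operatorname{dist}(\theta,2\pi\mathbb Z)$. For all other $n$, the angular condition fails once $h$ is small, because $h_n\to0$ as well. For the finitely many exceptional $n$, the radial condition fails once $h<\min h_n/2$. Hence $C_h\varphi(\theta)=0$ for all sufficiently small $h$.

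Suppose now $\theta\equiv0$, and take $h<1/4$. The angular condition forces $2^{-n}<h+h_n/4<2h$, so only indices $n\ge n_0(h)$ contribute, where $n_0$ is the least integer with $2^{-n_0}<2h$. Since $n_0(h)\to\infty$ as $h\to0$, we may assume $n_0\ge 3$. Bounding the intersection by the whole of $E_n$ gives
\[
\int_{S(0,h)}\varphi\dA\le\sum_{n\ge n_0}|E_n|\le\sum_{n\ge n_0}h_n^2\le 2h_{n_0}^2 .
\]
Now
\[
h_{n_0}=2^{-n_0^2-10}=(2^{-n_0})^{n_0}2^{-10}\le(2h)^{n_0}\le 8h^3 .
\]
Since $|S(0,h)|\ge h^2/\pi$, the average is $O(h^4)\to0$.

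The main point requiring care is this case $\theta=0$, where infinitely many pieces accumulate. The super-exponential choice $h_n=2^{-n^2-10}$ against the merely geometric angles $\theta_n=2^{-n}$ is exactly what makes the total mass of the pieces seen by $S(0,h)$ negligible relative to $h^2$. The remaining care is bookkeeping modulo $2\pi$ and checking disjointness, both of which are routine.
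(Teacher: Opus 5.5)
Your proposal is correct. For (i) and (ii) you follow essentially the paper's argument. For $\theta\not\equiv0$ you use the same radial/angular exclusion. At $\theta=0$ you bound the box integral by $\sum_{n\ge n_0}|E_n|$ as the paper does; your estimate $h_{n_0}\le(2h)^{n_0}\le 8h^3$ replaces the paper's $2^{-2N^2+2N-20}$. For (ii) you use the same containment $E_n\subset S(\theta_n,h_n)$ and area ratio.

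The genuine difference is in (iii). The paper proves non-compactness directly. It takes $a_n=(1-\tfrac34h_n)e^{i\theta_n}$ and shows $|k_{a_n}|^2\ge 9/(256h_n^2)$ on $E_n$, hence $\langle T_\varphi k_{a_n},k_{a_n}\rangle\ge 9/(4096\pi)$. It then contradicts compactness using $k_{a_n}\rightharpoonup0$.

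You instead deduce (iii) from (ii) together with the necessity half of Theorem~\ref{thm:carleson}. That is legitimate, since necessity there is proved independently of this section (via the unitaries $V_n$ and the $L^1$ convergence $H_h\to H$). It also makes explicit that the example fails exactly the angular uniformity that Theorem~\ref{thm:carleson} shows to be necessary. The paper's route is self-contained and elementary, and it shows directly that the Berezin transform stays bounded below along $a_n$, tying the example to the Axler--Zheng criterion. Your route is shorter but inherits the weight of the necessity proof.

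Two small points to tighten:
\begin{itemize}
\item In the case $\theta\not\equiv0$, your claim that ``the angular condition fails once $h$ is small, because $h_n\to0$'' must hold uniformly over infinitely many $n$. To secure this, add to your finite exceptional set the finitely many $n$ with $h_n\ge d:=\operatorname{dist}(\theta,2\pi\mathbb Z)$. For the remaining $n$ you have $h+h_n/4<d/2\le\operatorname{dist}(\theta,\theta_n+2\pi\mathbb Z)$ as soon as $h<d/4$.
\item At $\theta=0$, the step $h+h_n/4<2h$ uses $h_n/4<h$. This follows from the radial condition $h>h_n/2$, or alternatively from $h_n\le\theta_n$.
\end{itemize}
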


\begin{proof}
Each $E_n$ is measurable, so the indicator of their countable union is measurable and
$0\le\varphi\le1$. Since $h_n\le\theta_n$,
all angular intervals $(\theta_n-h_n/4,\theta_n+h_n/4)$ lie in $(0,1)$,
so there is no ambiguity concerning the branch of the angle. Integration in polar coordinates with normalized area gives
\begin{align}
|E_n|
&=\frac1\pi\int_{\theta_n-h_n/4}^{\theta_n+h_n/4}
                \int_{1-h_n}^{1-h_n/2}r\dd r\dd t\notag\\
&=\frac{h_n^2}{4\pi}\left(1-\frac{3h_n}{4}\right).
\label{eq:counter-area}
\end{align}
In particular, since $0<h_n<1$,
\begin{equation}\label{eq:counter-area-bounds}
\frac{h_n^2}{16\pi}\le |E_n|\le\frac{h_n^2}{4\pi}.
\end{equation}
For every $0<h<1$, the box area satisfies
\begin{equation}\label{eq:counter-box-bounds}
|S(\theta,h)|=\frac{h^2(2-h)}\pi,\qquad
\frac{h^2}\pi\le |S(\theta,h)|\le\frac{2h^2}\pi.
\end{equation}

\medskip
Let $d_{\mathbb T}$ denote the shortest angular distance on the circle. Fix
$\theta\not\equiv0\pmod{2\pi}$, and write $d=d_{\mathbb T}(\theta,0)>0$.
Since $\theta_n+h_n/4\to0$, choose $N_0\ge2$ so that, for $n\ge N_0$,
the angle $t$ of every point in $E_n$ satisfies $d_{\mathbb T}(t,0)<d/4$.
The triangle inequality gives $d_{\mathbb T}(t,\theta)>3d/4$,
so $S(\theta,h)$ is disjoint from these tail sets whenever $h<d/4$.
On the other hand, if
\[
h<\min_{1\le n<N_0}h_n/2,
\]
then every point in the box has modulus greater than $1-h>1-h_n/2$, so the box is also disjoint from the finitely many sets
$E_1,\ldots,E_{N_0-1}$.
Thus, in this fixed direction, the box average is identically zero at all sufficiently small scales.

\medskip
Consider $0<h<1/2$. If $E_n\cap S(0,h)\ne\varnothing$,
the chosen angular branch implies
\[
\theta_n-h_n/4<h.
\]
Since $h_n\le\theta_n$, it follows that $\theta_n<2h$.
Choose the unique integer $N\ge1$ such that
\[
2^{-(N+1)}<h\le2^{-N}.
\]
The inequalities $2^{-n}<2h\le2^{1-N}$ imply $n\ge N$.
Thus subadditivity and \eqref{eq:counter-area-bounds}--\eqref{eq:counter-box-bounds}
give
\begin{equation}\label{eq:counter-tail-start}
0\le\frac1{|S(0,h)|}\int_{S(0,h)}\varphi\dA
\le\frac1{4h^2}\sum_{n\ge N}h_n^2.
\end{equation}
Note that
\[
\frac{h_{n+1}^2}{h_n^2}=2^{-4n-2}\le\frac1{64}\quad(n\ge1),
\]
Hence
\[
\sum_{n\ge N}h_n^2\le\frac{64}{63}\,2^{-2N^2-20}.
\]
Substituting into \eqref{eq:counter-tail-start} and using $h^{-2}<2^{2N+2}$, we obtain
\[
0\le\frac1{|S(0,h)|}\int_{S(0,h)}\varphi\dA
\le\frac{64}{63}\,2^{-2N^2+2N-20}\longrightarrow0.
\]
As $h\downarrow0$, we have $N\to\infty$. This proves the limit along \emph{all} scales tending to zero. By angular periodicity,
\eqref{eq:counter-pointwise} holds for every fixed angle.

\medskip

By definition, $E_n\subset S(\theta_n,h_n)$. Therefore,
\begin{align*}
\frac1{|S(\theta_n,h_n)|}\int_{S(\theta_n,h_n)}\varphi\dA
&\ge\frac{|E_n|}{|S(\theta_n,h_n)|}\\
&=\frac{1-3h_n/4}{4(2-h_n)}\ge\frac1{32}.
\end{align*}
The last inequality also follows directly from
\eqref{eq:counter-area-bounds}--\eqref{eq:counter-box-bounds}.
This area ratio tends to $1/8$.
Since $h_n\to0$, angular uniform convergence fails.

\medskip
Take
\[
a_n=\left(1-\frac{3h_n}{4}\right)e^{i\theta_n},\qquad
k_a(z)=\frac{1-|a|^2}{(1-\overline a z)^2}.
\]
Write $b_n=1-3h_n/4$. For $z=re^{it}\in E_n$,
set $u=t-\theta_n$. Then $|u|<h_n/4$ and $1-r<h_n$, so
\begin{align*}
|1-\overline{a_n}z|
&=|1-b_nr e^{iu}|\\
&\le (1-b_nr)+b_nr|1-e^{iu}|\\
&\le (1-b_n)+(1-r)+|u|\le2h_n.
\end{align*}
Moreover,
\[
1-|a_n|^2=(1-b_n)(1+b_n)\ge\frac{3h_n}{4}.
\]
Thus, for $z\in E_n$,
\[
|k_{a_n}(z)|^2
=\frac{(1-|a_n|^2)^2}{|1-\overline{a_n}z|^4}
\ge\frac{9}{256h_n^2}.
\]
Using $\varphi\ge0$, the identity $\varphi=1$ on $E_n$, and
\eqref{eq:counter-area-bounds}, we obtain
\begin{equation}\label{eq:counter-kernel-lower}
\langle T_\varphi k_{a_n},k_{a_n}\rangle
=\int_\D\varphi|k_{a_n}|^2\dA
\ge\int_{E_n}|k_{a_n}|^2\dA
\ge\frac9{4096\pi}>0.
\end{equation}

On the other hand, $k_{a_n}\rightharpoonup0$ as $|a_n|\to1$.
Indeed, for any analytic polynomial $p$, the reproducing property gives
\[
|\langle p,k_{a_n}\rangle|
=(1-|a_n|^2)|p(a_n)|\longrightarrow0.
\]
Analytic polynomials are dense in $\A$, and $\norm{k_{a_n}}=1$.
For any $f\in\A$, polynomial approximation in norm followed by the Cauchy--Schwarz inequality
therefore yields $\langle f,k_{a_n}\rangle\to0$.
If $T_\varphi$ were compact, it would map this bounded weakly null sequence to a norm-null sequence. Hence
\[
|\langle T_\varphi k_{a_n},k_{a_n}\rangle|
\le\norm{T_\varphi k_{a_n}}\longrightarrow0,
\]
contradicting \eqref{eq:counter-kernel-lower}.
Therefore $T_\varphi$ is not compact.
\end{proof}
\subsection*{Data availability statement}
No datasets were generated or analyzed during the current study.

\subsection*{Conflicts of interest}
The authors declare that they have no competing interests.

\medskip
\textbf{Acknowledgments.}
G.~F. Cao was supported by the National Natural Science Foundation of China (Grant No.~12071155).
L. He was supported by the National Natural Science Foundation of China (Grant No.~12371127).


Cao: \texttt{guangfucao@163.com}

He: \texttt{helichangsha1986@163.com}

Zhang: \texttt{zsq0225@163.com}
\end{document}